\newcommand*{\diff}{\mathop{}\!\mathrm{d}}
\newcommand{\R}{\mathbb{R}}
\theoremstyle{plain}
\newtheorem{theorem}{Theorem}[section]
\newtheorem{lemma}[theorem]{Lemma}
\newtheorem{assumption}[theorem]{Asssumption}
\newtheorem{remark}[theorem]{Remark}
\numberwithin{equation}{section}
\numberwithin{table}{section}
\numberwithin{figure}{section}
\title{Exponential Runge-Kutta methods for delay equations in the sun-star abstract framework}
\author{
Alessia And\`o$^{1,2}$ and Rossana Vermiglio$^{1,3}$\\[.5em]
\small $^{1}$CDLab -- Computational Dynamics Laboratory\\[-.2em]
\small Department of Mathematics, Computer Science and Physics -- University of Udine\\[-.2em]
\small via delle scienze 206, 33100 Udine, Italy\\[.5em]
\small $^{2}$\texttt{alessia.ando@uniud.it}\\[.5em]
\small $^{3}$\texttt{rossana.vermiglio@uniud.it}
}
\date{\today}
\begin{document}
\clearpage
\maketitle
\thispagestyle{empty}
\begin{abstract}
Exponential Runge-Kutta methods for semilinear ordinary differential equations can be extended to abstract differential equations, defined on Banach spaces.
Thanks to the sun-star theory, both delay differential equations and renewal equations can be recast as abstract differential equations, which motivates the present work. The result is a general approach that allows us to define the methods explicitly and analyze their convergence properties in a unifying way. 
\end{abstract}

\smallskip
\noindent {\bf{Keywords:}} exponential methods, sun-star theory, abstract integral equations, delay differential equations, renewal equations.

\smallskip
\noindent{\bf{2020 Mathematics Subject Classification:}} Primary: 65L03, 65L04, 65L06, 65R20; Secondary: 34K05, 45D05, 47D03
\section{Introduction}\label{S_intro}
Numerical simulations play an important role in improving our understanding and insight into the behaviour of complex dynamical systems.\\
Although exponential integrators were already considered in the sixties, only in the recent decades they have proved to be effective numerical methods for time integration of stiff differential equations, and extensive research has been devoted to their construction, analysis, implementation and application. Good reviews can be found in \cite{HO10,MW05}. In various scientific and engineering applications, both the abstract formulation of time dependent partial differential equations in some Banach space and their spatial discretization give rise to semi-linear evolution problems of the form
\begin{equation}\label{IVPADE}
\left\{\setlength\arraycolsep{0.1em}\begin{array}{rcl}
u'(t)&=&\mathcal{A}_0u(t)+\mathcal{F}(t,u(t)),\quad t\in[0,T],
 \\[1mm]
u(0)&=&u_0,
\end{array}
\right.
\end{equation} 
where $u_0$ belongs to a Banach space $X$, $\mathcal{A}_0:\text{ dom}(\mathcal{A}_0)\subset X\to X$ is an unbounded operator or a stiff matrix, while $\mathcal{F}$ satisfies a local Lipschitz condition w.r.t. $u$ with a moderate Lipschitz constant in a strip along the exact solution. 

The basic idea of exponential integrators for \eqref{IVPADE} is to treat the nonlinearity explicitly and the linear part exactly. In particular, analogously to the construction of Runge-Kutta (RK) methods for ordinary differential equations (ODEs),  explicit exponential Runge-Kutta (ExpRK) methods have been derived starting from the variation of constants equation, employing suitable exponential quadrature rules. Extensive studies have been carried out to analyze convergence, and to construct  high-order methods \cite{HO10,L21}. These schemes allow for relative large step size and they have been applied to address challenges posed by parabolic problems \cite{DJLQ21,HO05a,HO05b,LO14} as well as other type of problems. As the list of references is quite long, we limit it to a selection of instances \cite{DP11,DZ04,D09,LL24,OSV23}.


In this paper we focus on delay equations, which, according to a well-known definition, are {\it rules for extending a function of time towards the future on the basis of the (assumed to be) known past} \cite{twin}.  When the rule of extension specifies the derivative of the unknown function at a certain time in terms of its past values, the delay equation is a delay differential equation (DDE), while in the case of renewal equations (REs) it is the value of the unknown function at a given time to be prescribed in term of the past values. Delay equations, including coupled DDEs/REs, are widely used in modeling physical and biological phenomena characterized by time delays or memory effects in their dynamics, see for instance \cite{diekmann95} and the refences therein. The introduction of the delay makes the model more accurate in predicting the real-life phenomena but generates infinite-dimensional dynamical systems. Therefore, numerical methods are essential to investigate their dynamics, with numerical simulations playing a crucial role. By means of the sun-star theory, delay equations can be reformulated as semilinear abstract equations of the form \eqref{IVPADE}, and an abstract variation-of-constants equation can be derived \cite{twin,diekmann95}. Thus, the sun-star theory provides the fundamental ingredients to apply explicit ExpRK-methods for time integration of delay equations in this abstract setting. This motivates us to consider these schemes for time integration of delay equations and explore their convergence in an unifying and original way. We remark that explicit ExpRK methods have been applied directly to a certain class of semilinear DDEs in \cite{FZ21,ZZO16} and the sun-star abstract setting has not been considered.

The outline of the paper is as follows. To enable readers to have a clear and concise understanding,  we provide brief overviews on ExpRK-methods and on the sun-star theory for delay equations in sections \ref{S_exp} and \ref{S_sunstar} respectively. In particular, in section \ref{S_exp} we recall the standard notations of ExpRK-methods and the fundamental results which turns out to be advantageous for the analysis of their application to delay equations. Section \ref{S_sunstar} is devoted to the abstract reformulation of delay equations.   The abstract setting for ExpRK which encompasses both DDEs and REs is presented in section \ref{S_abstract}, which also contains the convergence analysis of exponential Euler method (ExpEuler). Section \ref{S_expDDE} describes the ExpRK-methods for time integration of DDEs and REs, and section \ref{S_nDDE} includes some numerical results. Finally, we draw some conclusions in section \ref{S_end}.


\section{Explicit ExpRK-methods}\label{S_exp}

In this section, we recall the standard notations and some known results for explicit ExpRK-methods from \cite{HO05a,HO10,LO14}. The convergence analysis carried out therein is based on the abstract framework of analytic semigroups on a Banach space $X$ with norm $\| \cdot \|_X$ and on the following assumptions.

\begin{assumption}\label{A1}
$\mathcal{A}_0:\text{ dom}(\mathcal{A}_0)\subset X\to X$ is the infinitesimal generator of an analytic semigroup $\{\mathcal{T}_0(t)\}_{t \geq 0}$.
\end{assumption}

\begin{assumption}\label{A2}
\eqref{IVPADE} has a sufficiently smooth solution $u:[0,T] \to X$ with derivatives in $X$ and $\mathcal{F}$ is sufficiently often Fr\'echet differentiable in a strip along the exact solution. All occurring derivatives are assumed to be uniformly bounded.
\end{assumption}

The assumption \ref{A1} implies that $\|\mathcal{T}_0(t)\|_{X \leftarrow X} \leq M e^{\omega t}, t \geq 0,$ with $M$ and $\omega$ suitable constants, while from assumption \ref{A2} we have $\mathcal{F}$ is locally Lipschitz continuous in a strip along the exact solution.

Let $t_{n}=nh, n=0,1 \ldots N$ be a uniform mesh with constant step-size $h=\frac{T}{N}.$ A $\nu$-stage explicit ExpRK-method is defined by the formulae

\begin{equation}\label{expmethod}
\setlength\arraycolsep{0.1em}\left\{\begin{array}{rcl}
u_{n+1}&=&\mathcal{T}_0(h)u_n + h\displaystyle\sum_{i=1}^\nu b_i(h\mathcal{A}_0)\mathcal{F}(t_n+c_ih,U_{n,i})\\[2mm]
U_{n,i}&= &\mathcal{T}_0(c_ih)u_n +h\displaystyle\sum_{j=1}^{i-1} a_{ij}(h\mathcal{A}_0)\mathcal{F}(t_n+c_jh,U_{n,j}),\quad i=1,\ldots,\nu
\end{array}
\right.
\end{equation}
where $u_n$ is meant to approximate the value $u(t_n)$ while the internal stages $U_{n,i}$ approximate $u(t_n+c_i h).$ The Butcher tableau has the form
\[
\renewcommand\arraystretch{1.2}
\begin{array}
{c|ccccc|c}
0&0&0&\cdots&0&0&I_{X}\\
c_2 & a_{21}(h\mathcal{A}_0)&0&\cdots&0&0&\mathcal{T}_{0}(c_2h)\\
\vdots&\vdots&\vdots&\ddots&\vdots&\vdots&\vdots\\
c_{\nu} & a_{\nu 1}(h\mathcal{A}_0)&a_{\nu 2}(h\mathcal{A}_0)&\cdots&a_{\nu\nu-1}(h\mathcal{A}_0)&0&\mathcal{T}_{0}(c_\nu h)\\
\hline
& b_{1}(h\mathcal{A}_0) &\cdots &\cdots &b_{\nu-1}(h\mathcal{A}_0)&b_{\nu}(h\mathcal{A}_0)& \mathcal{T}_{0}(h)
\end{array}
\]
The coefficients $b_{i}(h\mathcal{A}_0), a_{ij}(h\mathcal{A}_0)$ are linear combinations of the operators
\begin{equation}\label{phik}
\varphi_k(h\mathcal{A}_0):=\displaystyle\frac{1}{h^k}\int_0^h\mathcal{T}_0(h-s)\frac{s^{k-1}}{(k-1)!}\diff s,\quad k\geq 1,
\end{equation}
which are bounded on $X$ \cite[Lemma 3.1]{HO05a}.  

%

For an explicit method to converge with the desired stiff order as defined in \cite{HO10}, it is necessary for its coefficients to satisfy certain stiff order conditions. Table \ref{ordercond} collects the conditions up to order 4, expressed in terms of the operators
\begin{equation}\label{psi_ij}
\setlength\arraycolsep{0.1em}\begin{array}{rcl}
\psi_j(h\mathcal{A}_0)&:=&\varphi_j(h\mathcal{A}_0) - \displaystyle\sum_{k=1}^\nu b_k(h\mathcal{A}_0)\frac{c_k^{j-1}}{(j-1)!},j=1,\ldots,\nu\\[2mm]
\psi_{ji}(h\mathcal{A}_0)&:=&\varphi_{i,j}(h\mathcal{A}_0) - \displaystyle\sum_{k=1}^{i-1} a_{ik}(h\mathcal{A}_0)\frac{c_k^{j-1}}{(j-1)!},i,j=1,\ldots,\nu\end{array},
\end{equation}
where $\varphi_{i,j}(h\mathcal{A}_0):=\varphi_j(c_ih\mathcal{A}_0)$.

\begin{table}
\caption{Stiff order condition for explicit ExpRK-methods, where $J$ and $K$ denote arbitrary bounded operators on $X$ \cite[Table 2.2]{HO10} and the functions $\psi_{i,j}$ are defined in \eqref{psi_ij}.}
\centering
\begin{tabular}{|c|c|c|}
\hline
No.&Order&Order condition\\
\hline
$1$ & $1$&$\psi_1(h\mathcal{A}_0)=0$\\
\hline
$2$ & $2$&$\psi_2(h\mathcal{A}_0)=0$\\
$3$ & $2$&$\psi_{1i}(h\mathcal{A}_0)=0,\quad i=1,\ldots,\nu$\\
\hline
$4 $& $3$&$\psi_3(h\mathcal{A}_0)=0$\\
$5$ & $3$&$\displaystyle\sum_{i=1}^\nu b_i(h\mathcal{A}_0)J\psi_{2i}(h\mathcal{A}_0)=0,\quad i=1,\ldots,\nu$\\
\hline
$6 $& $4$&$\psi_4(h\mathcal{A}_0)=0$\\
$7$ & $4$&$\displaystyle\sum_{i=1}^\nu b_i(h\mathcal{A}_0)J\psi_{3i}(h\mathcal{A}_0)=0$\\
$8$ & $4$&$\displaystyle\sum_{i=1}^\nu b_i(h\mathcal{A}_0)J\sum_{j=2}^{i-1} a_{ij}(h\mathcal{A}_0)J\psi_{2j}(h\mathcal{A}_0)=0$\\
$9$ & $4$&$\displaystyle\sum_{i=1}^\nu b_i(h\mathcal{A}_0)c_iK\psi_{2i}(h\mathcal{A}_0)=0$\\
\hline
\end{tabular}
\label{ordercond}
\end{table}
One can find the stiff order conditions for explicit ExpRK-methods up to order 5 in \cite[Table 1]{LO14} and order 6 in \cite[Table 1]{LA24}. The following theorem determines (a lower bound for) the convergence rate of explicit ExpRK-methods for constant step sizes that only requires a weakened form of the conditions of order $p.$ 

\begin{theorem}\label{th_convergence}
Let the initial value problem \eqref{IVPADE} satisfy assumptions \ref{A1}-\ref{A2}. Consider an explicit ExpRK-method that satisfies the order conditions up to order $p-1,$ as well as $\displaystyle\sum_{i=1}^{\nu}b_i(0)c_i^{p-1}=\frac{1}{p}$ and the conditions of order $p$ in the \emph{weak form}, i.e., with $b_i(h\mathcal{A}_0)$ replaced by $b_i(0)$ for $i=2,\ldots,\nu.$  Then, the method is convergent of order  $p$ and the error bound
\begin{equation}\label{errbound}
\|u_n-u(t_n)\|_X\leq Ch^p
\end{equation}
holds uniformly for $nh\in\displaystyle\left[0,T\right]$ for a constant $C$ that depends on $T,$ but is independent of $n$ and $h$.

\end{theorem}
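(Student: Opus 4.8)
The plan is to follow the now-standard strategy of Hochbruck–Ostermann for analytic semigroups, adapting it to the weakened order-$p$ hypothesis. The first step is to set up the error recursion. Writing $e_n := u_n - u(t_n)$ and $E_{n,i} := U_{n,i} - u(t_n + c_i h)$, I would insert the exact solution into the scheme \eqref{expmethod} using the variation-of-constants formula
\begin{equation*}
u(t_n + c_i h) = \mathcal{T}_0(c_i h) u(t_n) + \int_0^{c_i h} \mathcal{T}_0(c_i h - s)\mathcal{F}(t_n + s, u(t_n + s))\diff s,
\end{equation*}
and the analogous identity on $[t_n, t_{n+1}]$. Subtracting the scheme then yields
\begin{equation*}
e_{n+1} = \mathcal{T}_0(h) e_n + h\sum_{i=1}^\nu b_i(h\mathcal{A}_0)\bigl(\mathcal{F}(t_n+c_ih, U_{n,i}) - \mathcal{F}(t_n+c_ih, u(t_n+c_ih))\bigr) + \delta_{n+1},
\end{equation*}
where $\delta_{n+1}$ is the quadrature defect, and a similar relation holds for $E_{n,i}$ with internal defects $\Delta_{n,i}$. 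Solving the recursion gives $e_n = \sum_{k=0}^{n-1}\mathcal{T}_0((n-1-k)h)\bigl(h\sum_i b_i(h\mathcal{A}_0)(\ldots) + \delta_{k+1}\bigr)$.

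**The second step** is to Taylor-expand the defects. Using Assumption \ref{A2} to expand $\mathcal{F}(t_n + s, u(t_n+s))$ around $s=0$ (in both $s$ and via the chain rule through $u$), the defects $\delta_{n+1}$ and $\Delta_{n,i}$ become linear combinations of the operators $\psi_j(h\mathcal{A}_0)$ and $\psi_{ji}(h\mathcal{A}_0)$ from \eqref{psi_ij} applied to derivatives of the solution, plus a genuine remainder of size $O(h^{p+1})$ for $\delta$ and $O(h^p)$ (or better, depending on the stage) for the internal defects $\Delta_{n,i}$. The order conditions up to $p-1$ kill the low-order terms outright. The order-$p$ term in $\delta_{n+1}$ is, up to the remainder, $h^p \sum_i b_i(h\mathcal{A}_0)\frac{c_i^{p-1}}{(p-1)!}(\text{derivative term}) - h^p\varphi_p(h\mathcal{A}_0)(\text{same term})$, i.e. essentially $-h^p \psi_p(h\mathcal{A}_0)(\cdots)$; since we only assume the \emph{weak} form, $b_i(h\mathcal{A}_0)$ is replaced by $b_i(0)$ there, and the hypothesis $\sum_i b_i(0)c_i^{p-1} = 1/p = \int_0^1 \theta^{p-1}\diff\theta$ together with $\varphi_p(0) = 1/p!$ ensures the scalar part cancels, leaving a term of the form $h^p(\varphi_p(h\mathcal{A}_0) - \varphi_p(0))(\cdots)$, which is $O(h^{p})$ but crucially carries a factor that gains smoothing.

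**The third and decisive step** is the accumulation estimate. The naive bound $\|e_n\| \le \sum_k \|\delta_{k+1}\| \le n \cdot O(h^{p+1}) = O(h^p)$ works for the honest $O(h^{p+1})$ remainders. The subtle contributions are the boundary-like terms of exact size $h^p$: here one exploits that they appear as $\mathcal{T}_0((n-1-k)h)\bigl(\varphi_p(h\mathcal{A}_0) - \varphi_p(0)\bigr)(\cdots)$ and uses a summation-by-parts (Abel summation) together with the uniform bound $\|\mathcal{T}_0(t)\|\le M e^{\omega t}$ and boundedness of $\varphi_p(h\mathcal{A}_0)$ (cited after \eqref{phik}) to telescope the sum and recover one power of $h$ — the standard mechanism by which exponential integrators achieve full order despite an $O(h^p)$ local defect. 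The internal-stage errors $E_{n,i}$ are handled first: one shows $\|E_{n,i}\| \le C(h\|e_n\| + h^p)$ by the same expansion, feeds this into the bound for $e_{n+1}$, and closes a discrete Gronwall inequality to get \eqref{errbound} uniformly on $[0,T]$.

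**The main obstacle** is precisely this third step: making the summation-by-parts argument work with only the \emph{weak} order-$p$ condition. With the full condition $\psi_p(h\mathcal{A}_0)=0$ the order-$p$ defect term vanishes identically and the accumulation is routine; with the weakened hypothesis one must carefully isolate the residual operator $b_i(h\mathcal{A}_0) - b_i(0)$, show it contributes an extra factor that is $O(h)$ in the relevant weak operator norm when composed with $\mathcal{T}_0$ (this is where analyticity of the semigroup, not just strong continuity, is used), and verify that the scalar cancellation from $\sum_i b_i(0)c_i^{p-1} = 1/p$ removes exactly the non-smoothing part. Everything else — the Taylor expansions, the discrete Gronwall, the bootstrapping of the internal stages — is a routine adaptation of \cite{HO05a,HO10,LO14}, and I would reference those works for the technical lemmas rather than reproduce them.
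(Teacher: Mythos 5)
The paper itself does not prove Theorem \ref{th_convergence}: it is recalled as a known result and its proof is delegated to \cite{HO05a,HO10,LO14}, so your attempt has to be measured against the Hochbruck--Ostermann analysis. At the level of architecture your plan does match it: error recursion via variation of constants, expansion of the defects in terms of the operators $\psi_j(h\mathcal{A}_0)$ and $\psi_{ji}(h\mathcal{A}_0)$ of \eqref{psi_ij}, the observation that $\sum_i b_i(0)c_i^{p-1}=\tfrac1p$ forces $\psi_p(0)=0$, and recovery of the lost power of $h$ by summation by parts combined with the smoothing of the analytic semigroup, closed by a discrete Gronwall inequality.

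There is, however, a genuine gap in your treatment of the internal stages. The bound $\|E_{n,i}\|\le C(h\|e_n\|+h^p)$ that you propose to ``show by the same expansion'' is false for explicit methods of order $p\ge 3$: the stage defects $\Delta_{n,i}$ are generically only $O(h^2)$, since only $\psi_{1i}=0$ is available (for instance the second stage of \eqref{order3Butcher} is just an exponential Euler predictor), so the stage errors are $O(h^2)$ rather than $O(h^p)$, and feeding your claimed bound into Gronwall would cap the provable global order at about $2$. The actual mechanism in \cite{HO05a,HO10,LO14} is different: one linearizes $\mathcal{F}$ along the exact solution, writes $\mathcal{F}(t_n+c_ih,U_{n,i})-\mathcal{F}(t_n+c_ih,u(t_n+c_ih))=J_{n,i}E_{n,i}+O(\|E_{n,i}\|^2)$, substitutes the full representation of $E_{n,i}$ (including its $\psi_{ji}$-defect terms) into the weighted sum $h\sum_i b_i(h\mathcal{A}_0)J_{n,i}E_{n,i}$, and invokes precisely the order conditions involving the arbitrary bounded operators $J$ and $K$ (Nos.\ 5, 7, 8, 9 of Table \ref{ordercond}) to cancel the low-order stage contributions; only the remainder is then amenable to your summation/smoothing argument, and the Abel-summation trick cannot be applied directly to the uncancelled terms because the time-dependent Jacobian $J_{n,i}$ sits between the semigroup and the defect operator. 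Your proposal never mentions these $J$- and $K$-conditions, even though they are part of the hypotheses ``order conditions up to order $p-1$ and order $p$ in weak form'' and are the heart of the proof for $p\ge3$; your second and third steps are fine for the quadrature defect $\delta_{n+1}$, but as written the argument does not establish \eqref{errbound} for general $p$.
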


In the next sections we will consider the following methods up to order 3. The first is the exponential Euler method (ExpEuler), having stiff order 1 and Butcher tableau \begin{equation}\label{eulerButcher}
\renewcommand\arraystretch{1.2}
\begin{array}
{c|c|c}
0 & 0&I\\
\hline
 & \varphi_1&\varphi_1.
\end{array}
\end{equation}
The Heun method, having stiff order 2, is defined by
\begin{equation}\label{heunButcher}
\renewcommand\arraystretch{1.2}
\begin{array}
{c|cc|c}
0 & 0&0&I\\
c_2&c_2\varphi_{1,2}&0&\varphi_{1,2}\\
\hline
 & \varphi_1-\frac{1}{c_2}\varphi_2&\frac{1}{c_2}\varphi_2&\varphi_1\\
\end{array}.
\end{equation}
for $c_2=1$. Note that there is also a version of \eqref{heunButcher} which only satisfies the conditions of order 2 in the weak form defined as in Theorem \ref{th_convergence}, which yields the following family of methods
\begin{equation*}
\renewcommand\arraystretch{1.2}
\begin{array}
{c|cc|c}
0 & 0&0&I\\
c_2&c_2\varphi_{1,2}&0&\varphi_{1,2}\\
\hline
 & \left(1-\frac{1}{2c_2}\right)\varphi_1&\frac{1}{2c_2}\varphi_1&\varphi_1\\
\end{array}.
\end{equation*}
Finally, we consider the following method of stiff order 3:
\begin{equation}\label{order3Butcher}
\renewcommand\arraystretch{1.2}
\begin{array}
{c|ccc|c}
0 & 0&0&0&I\\
\frac{1}{2}&\frac{1}{2}\varphi_{1,2}&0&0&\varphi_{1,2}\\
\frac{2}{3}&\frac{2}{3}\varphi_{1,3}-\frac{8}{9}\varphi_{2,3}&\frac{8}{9}\varphi_{2,3}&0&\varphi_{1,3}\\
\hline
 & \varphi_1-\frac{3}{2}\varphi_2&0&\frac{3}{2}\varphi_2&\varphi_1\\
\end{array}
\end{equation}
Note that \eqref{order3Butcher} only satisfies the conditions of order 3 in the weak form.
\section{Sun-star theory for delay equations}\label{S_sunstar}

In this section we recall the abstract formulation of delay equations by using the sun-star theory and we provide the basics for our abstract setting. The interested readers are referred to \cite{twin,dgg07,diekmann95} for insights. Hereafter $d$ is a positive integer and $| \cdot |$ denotes a vector norm in $\mathbb{R}^{d}.$

\subsection{Delay differential equations}\label{SS_aDDE}
A delay differential equation (DDE) with finite delay $\tau\in(0,+\infty)$ can be written as
\begin{equation}\label{eq:DDE}
\setlength\arraycolsep{0.1em}\begin{array}{rcl}
x'(t)&=&F(t,x_{t}), \ t \geq 0,
\end{array}
\end{equation}
where $x_{t}$ defined as
\begin{equation*}
x_{t}(\theta):=x(t+\theta),\quad\theta\in[-\tau,0],
\end{equation*}
is the \emph{history function} at time $t$, which lives on the \emph{history space} $X,$ and $F:[0,+\infty)\times X\rightarrow\mathbb{R}^{d}$ is generally a sufficiently smooth nonlinear map. $X$ is classically defined as the Banach space $X:=C([-\tau,0],\mathbb{R}^{d}),$ equipped with the uniform norm $\| \phi\|_X:=\max_{\theta\in[-\tau,0]}|\phi(\theta)|, \ \phi \in X$ (see, e.g., \cite{hale77}).
 
An initial value problem for \eqref{eq:DDE} can then be formulated as
\begin{equation}\label{IVPDDE}
\left\{\setlength\arraycolsep{0.1em}\begin{array}{rcll}
x'(t)&=&F(t,x_{t}),&\quad t \in [0,T], \\[1mm]
x_0(\theta) &=&\phi(\theta),&\quad \theta\in[-\tau,0],
\end{array}
\right.
\end{equation} 
for a given function $\phi \in X$.
\begin{remark}\label{breakingpoints}

Even if $F,\phi$ are smooth functions, when $\phi'(0) \neq F(0,\phi)$ the solution $x$ of \eqref{IVPDDE} does not link smoothly to the initial function at $0$, the higher-order derivatives of $x$ along the integration interval $[0,T]$ present discontinuities points, often called \emph{breaking points}. However, due to the \emph{smoothing effect} of DDEs the solution attains more regularity from one breaking point to the next.
\end{remark} 
The DDE \eqref{eq:DDE} can be analyzed in the general framework provided by the perturbation theory for dual semigroups \cite{cdght87}, as done in \cite{diekmann95}. The dual space of $X$ can be identified with the set $X^*=\text{NBV}([0,\tau],\R^d)$ of functions of normalized bounded variation. If $\{T_0(t)\}_{t\geq 0}$ is the \emph{shift} $\mathcal{C}_0$-semigroup given by
\begin{equation}\label{T0}T_0(t)\phi=
\left\{\setlength\arraycolsep{0.1em}\begin{array}{rcl}
&\phi(t),&\quad t\in[-\tau,0], \\[1mm]
&\phi(0),&\quad t\geq 0,
\end{array}
\right.
\end{equation}
then, under the same representation, its adjoint operator is defined by
\begin{equation*}
(T_0^*(t)f)(\theta)=f(t+\theta),
\end{equation*}
and the largest subspace of $X^*$ on which $\{T_0^*(t)\}_{t\geq 0}$ is strongly continuous can be identified with $X^{\odot}= \R^d\times L^1([0,\tau],\R^d)$. The idea is to work with embedding $j:X\to X^{\odot *}=\R^d\times L^{\infty}([-\tau,0],\R^d)$ defined by
\begin{equation*}
j\phi(\psi^{\odot}) = \psi^{\odot}(\phi),\quad \phi\in X,\,\psi^{\odot}\in X^{\odot},
\end{equation*}
which, in our case, reads
\begin{equation*}
j\phi=(\phi(0);\phi),\quad \phi\in X.
\end{equation*}
Moreover,
\begin{equation}\label{T0sunstar}T_0^{\odot *}(t)(\alpha;\phi)=
\left(\alpha;\theta\mapsto\left\{\setlength\arraycolsep{0.1em}\begin{array}{rcl}
&\alpha,&\quad \theta\in[-t,0], \\[1mm]
&\phi(t+\theta),&\quad \theta\in[-\tau,-t)
\end{array}
\right.\right),
\end{equation}
confirming the identity $T_0^{\odot *}(j\phi)= j(T_0(t)\phi)$.
Finally, it can be proved that $X$ is \emph{sun-reflexive}, i.e., 
\begin{equation*}
X^{\odot\odot}=\{(\alpha;\phi)\in X^{\odot *}\lvert\phi(0)=\alpha\}=j(X).
\end{equation*}

It is the possibility of identifying $X^{\odot\odot}$ with $X$ that allows us to recast \eqref{eq:DDE} as a semilinear abstract differential equation and to derive an equivalent abstract variation-of-constant equation, which in turn enable us to apply ExpRK-methods described in Section \ref{S_exp}.

By setting
\begin{equation}\label{u_DDE}
u(t):=jx_t = (x(t);x_t),\quad t\geq 0,
\end{equation}
the infinitesimal generator $\mathcal{A}_0$ of $\{\mathcal{T}_0(t):=T_0^{\odot *}(t)\}_{t\geq 0}$ is given by
\begin{equation}\label{A_DDE}
\mathcal{A}_0(\alpha;\psi)=(0;\psi'),\quad D(\mathcal{A}_0)=\{(\alpha;\psi)\in X\lvert \psi\text{ is Lipschitz-continuous, }\psi(0)=\alpha\}
\end{equation}
and $\{\mathcal{T}_0(t)\}$ is strongly continuous semigroup on $X\cong X^{\odot\odot}$. For $\mathcal{F}$ defined by
\begin{equation}\label{F_DDE}
\mathcal{F}(t,(\psi(0);\psi))=(F(t,\psi);0),
\end{equation}
and $u_0=\psi$ the \eqref{IVPDDE} can be formally written as \eqref{IVPADE}. Note that $\mathcal{F}$ inherits the regularity of $F$.  
By formally integrating \eqref{IVPADE} we obtain the variation-of-constants equation
\begin{equation}\label{AVCF}
u(t)=\mathcal{T}_0(t)u_0+\int_0^t \mathcal{T}_0(t-s)\mathcal{F}(s,u(s))\diff s,
\end{equation}
whose interpretation requires to define the weak$^\star$ integral \cite[pag.52]{diekmann95}. The equivalence between \eqref{AVCF} and \eqref{IVPDDE} is guaranteed by the following result.

\begin{theorem}[{\cite[Theorem III.4.1]{diekmann95}}]
If $x:[-\tau,T]\to\R^d$ is a continuous solution of \eqref{IVPDDE}, then the function $u:t\mapsto jx_t$ is a solution of \eqref{AVCF} for $u_0=(\phi(0);\phi)$ which is continuous in $[0,T]$.
Viceversa, if $u:[0,T]\to X$ is a continuous solution of \eqref{AVCF} for $u_0=(\phi(0);\phi)$, then the function
\begin{equation*}
x(t):=
\left\{\setlength\arraycolsep{0.1em}\begin{array}{rcl}
&\phi(t),&\quad t\in[-\tau,0], \\[1mm]
&(j^{-1}u)(0),&\quad t\in[0,T]
\end{array}
\right.
\end{equation*}
is a solution of \eqref{IVPDDE} which is continuous in $[-\tau,T]$.
\end{theorem}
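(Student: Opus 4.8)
\emph{Strategy.} The statement is an equivalence, so the plan is to treat the two implications separately, in each case reducing the abstract identity \eqref{AVCF} to a pair of scalar identities by exploiting the concrete representations $X^{\odot\odot}\cong X$, $X^{\odot *}=\R^d\times L^{\infty}([-\tau,0],\R^d)$ and $X^{\odot}=\R^d\times L^1([0,\tau],\R^d)$, together with the explicit shift formula \eqref{T0sunstar} for $\mathcal{T}_0=T_0^{\odot *}$ and the definition \eqref{F_DDE} of $\mathcal{F}$. Once the weak$^\star$ integral in \eqref{AVCF} has been unwound by pairing with an arbitrary test element of $X^{\odot}$, the argument is essentially careful bookkeeping with these representations.

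\emph{Forward implication.} Let $x$ be a continuous solution of \eqref{IVPDDE} on $[-\tau,T]$. Since $x$ is uniformly continuous on that compact interval, $s\mapsto x_s$ is continuous into $X$, hence $s\mapsto\mathcal{F}(s,jx_s)=(F(s,x_s);0)$ depends continuously on $s$ in $X^{\odot *}$ and $u(t):=jx_t$ (cf.\ \eqref{u_DDE}) is continuous into $X^{\odot\odot}\cong X$. I would then verify \eqref{AVCF} by computing its right-hand side component-wise. By \eqref{T0sunstar} with $(\alpha;\phi)=(\phi(0);\phi)$, the term $\mathcal{T}_0(t)u_0$ has $\R^d$-component $\phi(0)$ and $L^\infty$-component equal to $\phi(0)$ on $[-t,0]$ and to $\theta\mapsto\phi(t+\theta)$ on $[-\tau,-t)$; by \eqref{T0sunstar} with $(\alpha;\phi)=(F(s,x_s);0)$ and time $t-s$, the integrand $\mathcal{T}_0(t-s)\mathcal{F}(s,jx_s)$ has $\R^d$-component $F(s,x_s)$ and $L^\infty$-component $\theta\mapsto F(s,x_s)\mathbf{1}_{\{s\le t+\theta\}}$. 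Integrating in $s$ — the $L^\infty$-identity read in the weak$^\star$ sense, i.e.\ after pairing with an arbitrary $g\in L^1([0,\tau],\R^d)$ and invoking Fubini — the $\R^d$-component of the right-hand side becomes $\phi(0)+\int_0^t F(s,x_s)\diff s=x(t)$ by the integrated form of \eqref{IVPDDE}, while its $L^\infty$-component at $\theta$ becomes $\phi(0)+\int_0^{t+\theta}F(s,x_s)\diff s=x(t+\theta)$ for $\theta\in[-t,0]$ and $\phi(t+\theta)=x(t+\theta)$ for $\theta\in[-\tau,-t)$. Hence the right-hand side equals $(x(t);x_t)=jx_t=u(t)$.

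\emph{Converse implication.} Let $u:[0,T]\to X$ be continuous and satisfy \eqref{AVCF} with $u_0=(\phi(0);\phi)$. By sun-reflexivity $u(t)\in X^{\odot\odot}=j(X)$, so $u(t)=j\phi_t=(\phi_t(0);\phi_t)$ for a unique $\phi_t\in C([-\tau,0],\R^d)$ that depends continuously on $t$ in the sup-norm (as $j$ is isometric on $X$); thus the function $x$ of the statement satisfies $x(t)=\phi_t(0)$ on $[0,T]$ and $x=\phi$ on $[-\tau,0]$. Running the same component-wise computation, now reading \eqref{AVCF} as available information rather than as a goal and using $\mathcal{F}(s,u(s))=(F(s,\phi_s);0)$, the $\R^d$-component yields $\phi_t(0)=\phi(0)+\int_0^t F(s,\phi_s)\diff s$ and the $L^\infty$-component yields $\phi_t(\theta)=\phi_{t+\theta}(0)$ for $\theta\in[-t,0]$ and $\phi_t(\theta)=\phi(t+\theta)$ for $\theta\in[-\tau,-t)$; combining the two gives $\phi_t(\theta)=x(t+\theta)$ for every $\theta\in[-\tau,0]$, i.e.\ $\phi_t=x_t$. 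Evaluating at $t=0$ forces $\phi_0=\phi$, so $x$ glues continuously at $0$ and is continuous on all of $[-\tau,T]$; then $s\mapsto F(s,x_s)$ is continuous, and differentiating $x(t)=\phi(0)+\int_0^t F(s,x_s)\diff s$ gives $x'(t)=F(t,x_t)$ on $[0,T]$ together with $x_0=\phi$, which is \eqref{IVPDDE}.

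\emph{Main obstacle.} The genuinely delicate point — as opposed to the algebra with \eqref{T0sunstar} — is the rigorous treatment of the weak$^\star$ integral in \eqref{AVCF}: one must check that $s\mapsto\mathcal{T}_0(t-s)\mathcal{F}(s,u(s))$ is weak$^\star$-integrable (it is in general only weak$^\star$-continuous, $\mathcal{T}_0$ being the adjoint of the sun-semigroup), that its value again lies in $X^{\odot\odot}$ so that both sides of \eqref{AVCF} live in $X$, and that the Fubini interchange used to pass from the pointwise-in-$\theta$ formula for the integrand's $L^\infty$-component to the $L^\infty$-component of the integral is legitimate. These are exactly the structural facts supplied by the perturbation theory for dual semigroups, so at that step I would invoke \cite{cdght87,diekmann95} rather than re-derive them.
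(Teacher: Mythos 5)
Your argument is correct, but note that the paper itself offers no proof of this statement: it is quoted verbatim as \cite[Theorem III.4.1]{diekmann95}, so there is no in-paper proof to compare against. Your component-wise verification --- unwinding the weak$^\star$ integral in \eqref{AVCF} by pairing with elements of $X^{\odot}$, applying the explicit shift formula \eqref{T0sunstar} to $u_0=(\phi(0);\phi)$ and to $(F(s,x_s);0)$, and matching the $\R^d$- and $L^\infty$-components with the integrated form of \eqref{IVPDDE} (and the reverse bookkeeping for the converse, with the structural facts about weak$^\star$ integrals delegated to \cite{cdght87,diekmann95}) --- is essentially the standard argument given in that reference, so the proposal is sound and consistent with the cited source.
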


\subsection{Renewal equations}\label{SS_aRE}

The perturbation theory for dual semigroups \cite{cdght87} we have resorted to in order to reformulate \eqref{eq:DDE}, can be extended to Renewal Equations (RE), i.e., equations of the form
\begin{equation}\label{eq:RE}
\setlength\arraycolsep{0.1em}\begin{array}{rcl}
x(t)&=&F(t,x_{t}),
\end{array}
\end{equation}
as done in \cite{dgg07}. The main difference with respect to the treatment of DDEs lies in the choice of the state space, this time $X:=L^1([-\tau,0],\R^d)$. An IVP for \eqref{eq:RE} can then be formulated as
\begin{equation}\label{IVPRE}
\left\{\setlength\arraycolsep{0.1em}\begin{array}{rcll}
x(t)&=&F(t,x_{t}),&\quad t>0, \\[1mm]
x_0(\theta) &=&\phi(\theta),&\quad \theta\in[-\tau,0],
\end{array}
\right.
\end{equation} 
for a given function $\phi\in X$. The $\mathcal{C}_0$-semigroup $\{T_0(t)\}_t\geq 0$  corresponding to \eqref{eq:RE} is given by
\begin{equation}\label{T0RE}
(T_0(t)\phi)(\theta)=
\left\{\setlength\arraycolsep{0.1em}\begin{array}{rcl}
&\phi(t+\theta),&\quad t+\theta\in[-\tau,0], \\[1mm]
&0,&\quad t+\theta> 0
\end{array}
\right.
\end{equation}
for $\theta\in[-\tau,0]$. The dual space $X^*$ can be represented by $L^{\infty}([0,\tau],\R^d)$ and, under this representation, the adjoint operator of \eqref{T0RE} is defined by
\begin{equation*}
(T_0^*(t)f)(\theta)=
\left\{\setlength\arraycolsep{0.1em}\begin{array}{rcl}
&f(t+\theta),&\quad t+\theta\in[0,\tau], \\[1mm]
&0,&\quad t+\theta> \tau.
\end{array}
\right.
\end{equation*}
Since translation is not continuous in $X^*$, we have 
\begin{equation*}
X^{\odot}=C_0([0,\tau),\R^d):=\left\{f\in C([0,\tau),\R^d)\middle| \lim_{t\to\tau}f(t)=0\right\}
\end{equation*}
and, thus, $X^{\odot*}=\text{NBV}((-\tau,0],\R^d)$. The corresponding embedding $j:X\to X^{\odot*}$ reads
\begin{equation}\label{j_RE}
(j\phi)(\theta)=\int_0^{\theta}\phi(s)\diff s,\qquad \theta\in(-\tau,0],
\end{equation}
while
\begin{equation}\label{T0REsunstar}(\mathcal{T}_0(t)\phi)(\theta)=(T_0^{\odot *}(t)\phi)(\theta)=
\left\{\setlength\arraycolsep{0.1em}\begin{array}{rcl}
&0,&\quad \theta\in[-t,0], \\[1mm]
&\phi(t+\theta),&\quad \theta\in[-\tau,-t)
\end{array}
\right.
\end{equation}
is strongly continuous on
\begin{equation*}
X^{\odot\odot}=\{f\in X^{\odot*}|f\in AC((-\tau,0],\R^d)\}=j(X)
\end{equation*}
and has infinitesimal generator 
\begin{equation}\label{A_RE}
\mathcal{A}_0(\phi)=\phi',\quad D(\mathcal{A}_0)=\{\phi\in X\lvert \phi'\in X,\,\phi(0)=0\}.
\end{equation}
Arguing as we did in Section \ref{SS_aDDE} with DDEs, \eqref{eq:RE} can be recast into an abstract differential equation for $u(t):=jx_t$ for $j$ in \eqref{j_RE} and
\begin{equation}\label{F_RE}
\mathcal{F}(t,j\phi):=F(t,\phi)H,\qquad H(\theta):=\left\{\setlength\arraycolsep{0.1em}\begin{array}{rcl}
&1,&\quad \theta\in[-\tau,0), \\[1mm]
&0,&\quad \theta=0.
\end{array}
\right.
\end{equation}
Also for REs we can write the integral abstract equation \eqref{AVCF}, interpreted in weak$^{\star}$ form, and the equivalence between \eqref{AVCF} and \eqref{IVPRE} is guaranteed by the following result.
\begin{theorem}[{\cite[Theorem 3.7]{dgg07}}]
If $x:[-\tau,T]\to\R^d$ is an $L^1$ solution of \eqref{IVPRE}, then the function $\eta:t\mapsto x_t$ is continuous in $[0,T]$ and $u=j\eta$ is a solution of \eqref{AVCF} for $u_0=j\phi$.
Viceversa, if $u=j\eta$ is a solution of \eqref{AVCF} with $u_0=j\phi$ for some continuous $\eta:[0,T]\to X$, then the function
\begin{equation*}
x(t):=
\left\{\setlength\arraycolsep{0.1em}\begin{array}{rcl}
&\phi(t),&\quad t\in[-\tau,0], \\[1mm]
&F(t,\eta(t)),&\quad t\in[0,T]
\end{array}
\right.
\end{equation*}
is a solution of \eqref{IVPRE} which belongs to $L^1([-\tau,T],\R^d)$.
\end{theorem}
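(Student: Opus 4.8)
The plan is to adapt the blueprint of the DDE case, \cite[Theorem III.4.1]{diekmann95}: verify both implications by substituting the explicit representations \eqref{j_RE}, \eqref{T0REsunstar}, \eqref{F_RE} directly into the weak$^\star$ form of \eqref{AVCF}, and exploiting the injectivity of $j$ together with the identification $X^{\odot\odot}=j(X)$. A preliminary point common to both directions is the continuity of $\eta\colon t\mapsto x_t$ into $X=L^1([-\tau,0],\R^d)$: since $\|x_{t+h}-x_t\|_X=\int_{-\tau}^0|x(t+h+\theta)-x(t+\theta)|\diff\theta$ and $x\in L^1([-\tau,T],\R^d)$, this is exactly the continuity of the translation group on $L^1$ (including near $t=0$, where $x$ is already $L^1$ on all of $[-\tau,T]$), obtained from density of compactly supported continuous functions. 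In particular $s\mapsto F(s,\eta(s))$ is continuous, hence in $L^1([0,T],\R^d)$.

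For the forward implication it then suffices to check, for every $\psi^\odot\in X^\odot=C_0([0,\tau),\R^d)$ and every $t\in[0,T]$, the scalar identity obtained by pairing \eqref{AVCF} with $\psi^\odot$; by the very definition of the weak$^\star$ integral this reads
\[
\langle\psi^\odot, jx_t\rangle=\langle\psi^\odot,\mathcal{T}_0(t)j\phi\rangle+\int_0^t\langle\psi^\odot, T_0^{\odot*}(t-s)\mathcal{F}(s,jx_s)\rangle\diff s .
\]
Inserting $\mathcal{F}(s,jx_s)=F(s,x_s)H=x(s)H$ — here using the renewal equation $x(s)=F(s,x_s)$ for $s>0$ — expanding the pairings by \eqref{T0REsunstar} and \eqref{j_RE}, and changing the order of integration (Fubini, licit since $x$ is locally integrable and $\psi^\odot$ bounded) collapses both sides to the same double integral of $x$. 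Equivalently, since the weak$^\star$ integral of $s\mapsto T_0^{\odot*}(t-s)\mathcal{F}(s,jx_s)$ turns out to land in $X^{\odot\odot}$, one may argue pointwise in $\theta$ and reduce \eqref{AVCF} to
\[
(jx_t)(\theta)=(\mathcal{T}_0(t)j\phi)(\theta)+\int_{\max\{0,\,t+\theta\}}^{t}x(s)\diff s ,
\]
which one checks directly from \eqref{j_RE} once $(\mathcal{T}_0(t)j\phi)(\theta)$ is recognized as the contribution of $x_0=\phi$ to $x_t$ on the segment $\theta\in[-\tau,-t)$.

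The backward implication reuses this computation. Given $u=j\eta$ solving \eqref{AVCF} with $u_0=j\phi$ for some continuous $\eta$, evaluation at $t=0$ and injectivity of $j$ give $\eta(0)=\phi$, so the glued function $x$ of the statement has $x_0=\phi$ and is $L^1$ on $[-\tau,T]$ (it equals $\phi\in L^1$ on $[-\tau,0]$ and the continuous function $F(\cdot,\eta(\cdot))$ on $[0,T]$). Now substitute $\mathcal{F}(s,u(s))=F(s,\eta(s))H=x(s)H$ — by the definition of $x$, with no circularity — into \eqref{AVCF}; running the computation of the previous paragraph yields $u(t)=jx_t$ for every $t\in[0,T]$. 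Since also $u(t)=j\eta(t)$ and $j$ is injective, $\eta(t)=x_t$, whence $x(t)=F(t,\eta(t))=F(t,x_t)$ and $x$ solves \eqref{IVPRE}.

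The work is not conceptual but consists in the bookkeeping hidden in the two displays: making the weak$^\star$ integral precise, verifying that it indeed takes values in $X^{\odot\odot}=j(X)$ so that $j^{-1}$ and the pointwise-in-$\theta$ manipulations are legitimate, and — the genuinely error-prone part — keeping the several orientation conventions consistent: the reflection $\theta\leftrightarrow-\theta$ implicit in the $X$--$X^*$ and $X^\odot$--$X^{\odot*}$ dualities, the orientation in \eqref{j_RE}, and the domain splittings in \eqref{T0REsunstar}, so that the two sides of \eqref{AVCF} coincide rather than differing by a sign. A minor additional point is the uniform-in-$t$ control of $\eta$ near $t=0$, where the history still sees the datum $\phi$.
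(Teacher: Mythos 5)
The paper gives no proof of this statement: it is imported verbatim as \cite[Theorem 3.7]{dgg07}, so there is no internal argument to compare yours against. Your proposal is, in outline, the standard proof of that cited result (and of its DDE counterpart \cite[Theorem III.4.1]{diekmann95}): continuity of $t\mapsto x_t$ in $L^1$ by continuity of translation, explicit computation of the weak$^\star$ integral so that \eqref{AVCF} collapses to a concrete identity for the history of the glued function, and injectivity of $j$ to transfer back and forth. The logical skeleton is sound; in particular your observation that the pointwise computation only uses the values $x(s)$, $s\in[0,t]$, and $x_0=\phi$ (not the renewal equation itself) is exactly what makes the backward direction non-circular, and evaluating \eqref{AVCF} at $t=0$ to get $\eta(0)=\phi$ is the right way to start it.

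One concrete caveat: the ``orientation bookkeeping'' you defer is not vacuous with the conventions as printed in this paper. Taking \eqref{j_RE} literally, $(jx_t)(\theta)=\int_0^\theta x(t+s)\diff s=-\int_{t+\theta}^{t}x(\sigma)\diff\sigma$ for $\theta\le 0$, whereas the forcing term computed from \eqref{T0REsunstar} and \eqref{F_RE} contributes $+\int_{\max\{0,t+\theta\}}^{t}x(s)\diff s$; so the pointwise identity you display fails by a sign unless one reverses an orientation somewhere (e.g.\ reads the embedding as $(j\phi)(\theta)=\int_\theta^0\phi(s)\diff s$, or flips the sign of $H$), which is how the conventions are arranged in \cite{dgg07}. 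This is a transcription issue of the present paper rather than a flaw in your strategy, but it shows that the one place where your sketch postpones work is precisely the place where the computation can silently go wrong; carrying out the pairing $\langle\,\cdot\,,\psi^\odot\rangle$ computation once, with a fixed normalization of the NBV representation, would close the argument.
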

\section{The abstract setting}\label{S_abstract}

In this section, based on the sun-star theory in the sun-reflexivity case, we introduce the abstract setting which enables to encompass the abstract formulations of DDEs and REs, and to analyse the convergence of ExpRK methods in a unifying way.  We pay particular attention to the weak$^\star$ integral. 

Starting from the Banach space of functions $X$ and the $\mathcal{C}_0$-semigroup $\{T_0(t)\}_{t \geq 0}$, the space $X^{\odot} \subset X^\star$ has been initially introduced to then arrive to the space $X^{\odot\star},$ in which $X$ is embedded, and to the extension $\mathcal{T}_0(t)$ to $X^{\odot\star}$ of $T_0(t)$. Through isometric isomorphisms, the spaces $X^{\odot}, X^{\odot\star}$ can be identified with spaces of functions, i.e. $X^{\odot} \cong Z$ and $X^{\odot\star} \cong Y.$ For delay equations $T_0(t)$ is the shift semigroup on $X$ and $\| \mathcal{T}_0(t)\|_{Y\leftarrow Y} \leq 1, \ t \geq 0.$ However, for a more general applicability, we assume that $\| \mathcal{T}_0(t)\|_{Y\leftarrow Y} \leq  M e^{\omega t}, \ t \geq 0,$ for some $\omega.$

Let  $Y$ be a Banach space of functions with values in $\R^d$, $X \subset Y$ be a subspace of functions with higher regularity. We consider the abstract Cauchy problem \eqref{IVPADE}. Given $u_0 \in X$, we are interested in a \emph{mild} solution of \eqref{IVPADE}, that is, a solution $u$ of the abstract integral equation \eqref{AVCF}. 
We make the following assumptions. 

\begin{assumption}\label{AA1}
$\mathcal{A}_0:D(\mathcal{A}_0) \subset X \to Y$ is the infinitesimal generator of a strongly continuous semigroup $\{\mathcal{T}_0(t)\}_{t\geq0}$ on $X$ with
\begin{equation*}
D(\mathcal{A}_0)\subset\{\psi\in Y \lvert \mathcal{A}_0\psi\in Y\}.
\end{equation*}
\end{assumption} 

\begin{assumption}\label{AA2}
 $\mathcal{F}$ is locally Lipschitz-continuous with respect to its second argument in a strip along the exact solution $u$, i.e., for all $R\in[0,\overline{R}]$ there exists $L>0$ such that
\begin{equation*}
\|\mathcal{F}(t,w)-\mathcal{F}(t,v)\|_Y\leq L\|w-v\|_Y,\quad t\in[0,T],\quad\|w-u\|_Y,\|v-u\|_Y\leq R.
\end{equation*}

\end{assumption}
\begin{assumption}\label{AA3}
$u:[0,T] \to X$ is a sufficiently smooth solution and $\mathcal{F}$ is
is sufficiently often Fr\'echet differentiable in a strip along the exact solution. All occurring derivatives are assumed to be uniformly bounded.
\end{assumption} 
Since $\{\mathcal{T}_0(t)\}_{t\geq0}$ is only strongly continuous on $X$ and not necessarily on $Y$, the integral in \eqref{AVCF} needs to be interpreted in the weak$^*$ sense. This means that an integral of the form $Q:=\displaystyle \int_0^t q(s)\diff s$ is defined as the unique functional mapping $z\in Z$ to $\int_0^t q(s)z\diff s$, that is
\begin{equation*}
\langle Q, z\rangle = \int_0^t\langle q(s),z\rangle\diff s,
\end{equation*}
where $\langle y,z\rangle$ denotes the ``concrete" pairing \cite[page 41]{diekmann95} between the functions $y \in Y, z \in Z$. Note that $|\langle y,z\rangle | \leq \|y\|_Y \|z\|_Z, \ y \in Y, z \in Z.$
\begin{lemma}\label{phibounded}
The operators \eqref{phik} are norm bounded.
\end{lemma}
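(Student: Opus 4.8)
The plan is to show that each operator $\varphi_k(h\mathcal{A}_0)$ defined in \eqref{phik} maps $Y$ boundedly to $Y$ (equivalently, acts boundedly on $X\cong X^{\odot\odot}$), by exploiting only the growth bound $\|\mathcal{T}_0(s)\|_{Y\leftarrow Y}\le Me^{\omega s}$ assumed in Section \ref{S_abstract} together with the weak$^\star$ interpretation of the integral. The key point is that, unlike the analytic-semigroup setting of \cite{HO05a}, here we have a genuine uniform bound on the semigroup on all of $Y$, so no smoothing or resolvent estimates are needed; the integrand $s\mapsto\mathcal{T}_0(h-s)\frac{s^{k-1}}{(k-1)!}$ is (weak$^\star$) integrable and its weak$^\star$ integral defines a bounded operator.

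First I would fix $h>0$ and $k\ge1$ and, for arbitrary $y\in Y$, consider the function $g(s):=\mathcal{T}_0(h-s)y\cdot\frac{s^{k-1}}{(k-1)!}$ for $s\in[0,h]$. By Assumption \ref{AA1} the semigroup is strongly continuous on $X$, and on $Y$ we have $\|\mathcal{T}_0(h-s)y\|_Y\le Me^{\omega(h-s)}\|y\|_Y\le Me^{|\omega|h}\|y\|_Y$, so $\|g(s)\|_Y\le \frac{s^{k-1}}{(k-1)!}Me^{|\omega|h}\|y\|_Y$, which is bounded on $[0,h]$ with integral over $[0,h]$ at most $\frac{h^k}{k!}Me^{|\omega|h}\|y\|_Y$. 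Pairing against any $z\in Z$ and using $|\langle g(s),z\rangle|\le\|g(s)\|_Y\|z\|_Z$, the scalar function $s\mapsto\langle g(s),z\rangle$ is integrable, so the weak$^\star$ integral $\varphi_k(h\mathcal{A}_0)y=\frac1{h^k}\int_0^h g(s)\,\diff s$ is a well-defined element of $Y$, and
\[
|\langle \varphi_k(h\mathcal{A}_0)y,z\rangle|\le\frac1{h^k}\int_0^h\|g(s)\|_Y\diff s\,\|z\|_Z\le\frac{M e^{|\omega|h}}{k!}\|y\|_Y\|z\|_Z .
\]
Taking the supremum over $z\in Z$ with $\|z\|_Z\le1$ (recalling that $Y\cong X^{\odot\star}$ is a dual space, so this supremum recovers $\|\cdot\|_Y$) gives $\|\varphi_k(h\mathcal{A}_0)y\|_Y\le\frac{M e^{|\omega|h}}{k!}\|y\|_Y$, hence $\varphi_k(h\mathcal{A}_0)$ is norm-bounded on $Y$ (with constant depending on $h$, $\omega$, $M$, $k$ but not on $y$). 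The same estimate restricted to $X$ shows the coefficients $a_{ij}(h\mathcal{A}_0)$, $b_i(h\mathcal{A}_0)$ — finite linear combinations of the $\varphi_k(c_ih\mathcal{A}_0)$ — are bounded on $X$, which is what the method formulae \eqref{expmethod} require.

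The main obstacle, and the only genuinely delicate point, is justifying that the weak$^\star$-Riemann/Pettis-type integral defining $\varphi_k(h\mathcal{A}_0)y$ actually lands back in $Y$ rather than merely in the bidual, i.e.\ that the functional $z\mapsto\int_0^h\langle g(s),z\rangle\diff s$ is represented by an element of $Y$. For delay equations this follows from the sun-reflexivity structure recalled in Section \ref{S_sunstar}: $Y=X^{\odot\star}$ is itself a dual space and the orbit $s\mapsto\mathcal{T}_0(s)y$ is weak$^\star$-continuous, so the weak$^\star$ integral of a bounded weak$^\star$-measurable $Y$-valued function exists in $Y$ — this is exactly the construction in \cite[p.\ 52]{diekmann95}, which I would cite. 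Everything else is the routine estimate above; I would keep the write-up to the displayed bound and the remark that the constant is independent of the argument, noting that this is the abstract analogue of \cite[Lemma 3.1]{HO05a} but now using the uniform semigroup bound in place of analyticity.
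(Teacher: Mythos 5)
Your proof is correct and follows essentially the same route as the paper: both rest on the weak$^\star$-integral construction of \cite[p.~52]{diekmann95} and the uniform growth bound $\|\mathcal{T}_0(t)\|_{Y\leftarrow Y}\le Me^{\omega t}$, you simply carry out the duality estimate against $z\in Z$ by hand where the paper invokes the packaged result \cite[Lemma 2.3]{diekmann95} (which in addition gives norm continuity of the convolution and that its values lie in $X$, facts not needed for bare norm-boundedness).
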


\begin{proof} 
From \cite[Lemma 2.3]{diekmann95} we have that if the function $f:[0,T] \to Y$ is norm continuous, then the function $v:[0,T] \to Y$ defined as weak$^*$ integral $v(t)=\int_0^t \mathcal{T}_0(t-s) f(s) ds, \ t \in [0,T],$ is norm continuous, takes values in $X$ and $$\|v(t)\|_Y \leq  M  \frac{e^{\omega t}-1}{\omega} \sup_{0 \leq s \leq t} \| f(s) \|_Y, t \in [0,T].$$  Then, the boundedness of \eqref{phik} easily follows from
$$\left\| \int_0^h\mathcal{T}_0(h-s)\frac{s^{k-1}}{(k-1)!}\diff s \right\|_Y \leq M \frac{e^{\omega h}-1}{\omega}\frac{h^{k-1}}{(k-1)!}$$ for $k\geq 1.$
\end{proof}
Since the coefficients of the explicit ExpRK methods \eqref{expmethod} are linear combinations of \eqref{phik}, from Lemma \ref{phibounded}, we get that they are the bounded operators on $Y.$ 

For a better understanding of the weak$^\star$ integral, we first study the convergence of the ExpEuler method, having Butcher tableau \ref{eulerButcher}.

Let $f(t)=\mathcal{F}(t,u(t)), t \in [0,T].$  We have
\begin{equation*}
\setlength\arraycolsep{0.1em}\begin{array}{rcll}
u(t_{n+1})&=&\displaystyle\mathcal{T}_0(h)u(t_n)+\int_0^h\mathcal{T}_0(h-s)f(t_n+s)\diff s \\[1mm]
 &=&\displaystyle\mathcal{T}_0(h)u(t_n)+\sigma_{n+1}\end{array}
\end{equation*}
where the local error is given by 
$$\sigma_{n+1}=\int_0^h\mathcal{T}_0(h-s)(f(t_n+s)-f(t_n))\diff s=\int_0^h\mathcal{T}_0(h-s)\int_0^sf'(t_n+\sigma)\diff \sigma \diff s.$$
\begin{lemma}\label{lbounded}
Assume that $f' \in L^{\infty}((0,T),Y).$ Then
 $$
 \left\|\sum_{k=0}^n \mathcal{T}_0(kh)\sigma_{n+1-k}\right\|_Y \leq C h^2 \sup_{0 \leq T \leq T} \| f'(t) \|_Y
 $$
 holds with a constant $C$, uniformly for $0 \leq t_n \leq T.$
 \end{lemma}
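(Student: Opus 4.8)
The idea is to estimate the sum $\sum_{k=0}^n \mathcal{T}_0(kh)\sigma_{n+1-k}$ by first bounding a single $\mathcal{T}_0(kh)\sigma_{n+1-k}$ term and then exploiting telescoping so that the geometric factors $e^{\omega kh}$ sum to something $O(1/h)$, which combined with the $h^3$ per-term bound gives the claimed $Ch^2$. First I would use the $C^1$-representation of the local error already displayed,
\begin{equation*}
\sigma_{n+1-k}=\int_0^h\mathcal{T}_0(h-s)\int_0^s f'(t_{n-k}+\sigma)\diff\sigma\diff s,
\end{equation*}
and apply Lemma~\ref{phibounded}'s underlying estimate (from \cite[Lemma 2.3]{diekmann95}): since $s\mapsto \int_0^s f'(t_{n-k}+\sigma)\diff\sigma$ is norm continuous with norm at most $s\,\|f'\|_{L^\infty}$, the weak$^*$ integral $\sigma_{n+1-k}$ satisfies
\begin{equation*}
\|\sigma_{n+1-k}\|_Y\le M\,\frac{e^{\omega h}-1}{\omega}\cdot h\,\sup_{0\le t\le T}\|f'(t)\|_Y \le C_1 h^2\,\sup_{0\le t\le T}\|f'(t)\|_Y,
\end{equation*}
where $C_1$ absorbs $M$ and $\sup_{h\in(0,T]}\frac{e^{\omega h}-1}{\omega h}$, which is finite. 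A subtle point worth spelling out: one must check that $\mathcal{T}_0(kh)$ may legitimately be applied to $\sigma_{n+1-k}$ and that $\|\mathcal{T}_0(kh)\sigma_{n+1-k}\|_Y\le Me^{\omega kh}\|\sigma_{n+1-k}\|_Y$ — this is exactly why the hypothesis $\|\mathcal{T}_0(t)\|_{Y\leftarrow Y}\le Me^{\omega t}$ was assumed in the abstract setting, and it is the reason the statement is phrased for the weak$^*$ extension rather than the restricted semigroup.

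Next I would sum: by the semigroup bound and the single-term estimate,
\begin{equation*}
\left\|\sum_{k=0}^n \mathcal{T}_0(kh)\sigma_{n+1-k}\right\|_Y \le \sum_{k=0}^n M e^{\omega kh}\,C_1 h^2\,\sup_{0\le t\le T}\|f'(t)\|_Y.
\end{equation*}
The geometric sum $\sum_{k=0}^n e^{\omega kh}=\frac{e^{\omega(n+1)h}-1}{e^{\omega h}-1}$ is bounded by $\frac{e^{\omega T}-1}{e^{\omega h}-1}\le \frac{e^{\omega T}-1}{\omega h}\cdot\frac{\omega h}{e^{\omega h}-1}$ when $\omega>0$ (for $\omega\le 0$ the sum is at most $n+1\le T/h+1$, which is even more favourable and handled by the same bookkeeping), so $h^2\sum_{k=0}^n e^{\omega kh}\le C_2 h$ with $C_2$ depending only on $\omega$ and $T$. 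Multiplying through gives the bound $C h$ times... — wait, that yields $Ch$, not $Ch^2$. Let me reconcile: the per-term bound is genuinely $O(h^2)$ and there are $O(1/h)$ terms, so the honest outcome of the naive triangle-inequality estimate is $O(h)$, which matches the order-$1$ (ExpEuler, stiff order $1$) global error — but the lemma claims $Ch^2$ for the particular combination $\sum_k \mathcal{T}_0(kh)\sigma_{n+1-k}$.

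This means the naive termwise bound is \emph{not} enough, and the main obstacle is precisely this: one must extract an extra power of $h$ from cancellation in the sum. The standard device (as in \cite{HO05a,HO10}) is \emph{summation by parts} — rewrite $\sigma_{n+1-k}$ using the even finer expansion, i.e.\ split $f(t_{n-k}+s)-f(t_{n-k}) = s f'(t_{n-k}) + \int_0^s(s-\sigma)f''(\dots)\diff\sigma$ is \emph{not} available here since only $f'\in L^\infty$ is assumed; instead one keeps the first-order form and performs an Abel summation on $\sum_k \mathcal{T}_0(kh)\int_0^h\mathcal{T}_0(h-s)\big(f(t_{n-k}+s)-f(t_{n-k})\big)\diff s$. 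Concretely, I would write $f(t_{n-k}+s)-f(t_{n-k}) = \big(f(t_{n-k}+s)-f(t_{n+1})\big)-\big(f(t_{n-k})-f(t_{n+1})\big)+\dots$ no — the clean route is: set $g_j := f(t_j)$, note $\sum_{k=0}^n \mathcal{T}_0(kh)\mathcal{T}_0((h-s)\text{-part})$ acting on the increments forms a telescoping-like structure because $\mathcal{T}_0(kh)\mathcal{T}_0(h-s) = \mathcal{T}_0((k+1)h - s)$, so the whole sum collapses to $\int_0^h \mathcal{T}_0((n+1)h-s) f(t_0+s\text{-shifted})\diff s$ type terms minus boundary terms; after Abel summation the "bad" factor $\sum e^{\omega kh}$ no longer multiplies the full $O(h^2)$ increment but rather a difference $\sigma_{n+1-k}-\sigma_{n-k}$ or is paired against $\mathcal{T}_0(kh)-\mathcal{T}_0((k-1)h)$, each contributing only $O(h)$ after summation, leaving one net $O(h)$ factor to combine with a single $O(h)$ from one undifferenced increment. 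I expect the delicate bookkeeping of these boundary terms — and justifying the manipulations for the weak$^*$ integral (linearity of $\langle\cdot,z\rangle$, Fubini on $[0,h]\times[0,s]$, and pulling $\mathcal{T}_0(kh)$ inside) — to be where the real work lies; the hypothesis $f'\in L^\infty$ (rather than $f''$) is exactly calibrated so that only one such summation is needed, and no more regularity is available.
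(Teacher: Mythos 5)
The first half of your argument is essentially the paper's proof: using \cite[Lemma 3.14, Lemma 2.3]{diekmann95} (the same estimate underlying Lemma \ref{phibounded}) one writes $\mathcal{T}_0(kh)\sigma_{n+1-k}=\int_0^h\mathcal{T}_0((k+1)h-s)\int_0^s f'(t_{n-k}+\sigma)\diff\sigma\diff s$, bounds each term by $\tfrac{h^2}{2}M\tfrac{e^{\omega(k+1)h}-1}{\omega}\sup_{0\le t\le T}\|f'(t)\|_Y$, and sums over $k=0,\dots,n$; exactly as you computed, this yields a bound proportional to $h\sup_{0\le t\le T}\|f'(t)\|_Y$, and the paper's constant $C=\tfrac12 M\tfrac{e^{\omega T}-1}{\omega}$ is precisely what this summation produces. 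The exponent $2$ in the lemma's statement is a slip (note also the typo $\sup_{0\le T\le T}$): in the ExpEuler convergence theorem the lemma is invoked in the form $hC\sup_{0\le t\le T}\|f'(t)\|_Y$, which is what drives the first-order error bound, consistent with the stiff order one of ExpEuler. So the bound your straightforward estimate gives, $Ch$, is the one the paper actually proves and uses.

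The second half of your proposal is a genuine gap: the attempt to recover an extra factor of $h$ by Abel summation/cancellation is not in the paper, is left unfinished, and cannot be completed, because the $Ch^2$ bound for the accumulated defect is false in general. Take the DDE setting with $F(t,\psi)=t$, so that $f'(t)=(1;0)$ is constant; then $\sigma_{n+1-k}=h^2\varphi_2(h\mathcal{A}_0)(1;0)$ has head component $\tfrac{h^2}{2}$, the shift semigroup $\mathcal{T}_0(kh)$ leaves that head component unchanged, and the heads of the $n+1\approx T/h$ transported terms all add with the same sign, so $\bigl\|\sum_{k=0}^n\mathcal{T}_0(kh)\sigma_{n+1-k}\bigr\|_Y\approx\tfrac{Th}{2}$, genuinely $O(h)$ and not $O(h^2)$. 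Your instinct before second-guessing yourself was therefore correct: per-term $O(h^2)$, accumulated $O(h)$; the right conclusion is that the statement's exponent needs correcting, not that a sharper summation-by-parts argument is hiding in the proof.
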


\begin{proof} 

From \cite[Lemma 3.14, Lemma 2.3]{diekmann95} we get 
$$\mathcal{T}_0(kh)\sigma_{n+1-k}=\int_0^h\mathcal{T}_0((k+1)h-s)\int_0^sf'(t_n+\sigma)\diff \sigma \diff s,$$
and
$$\left\|\int_0^h\mathcal{T}_0((k+1)h-s)\int_0^sf'(t_n+\sigma)\diff \sigma \diff s\| \leq \frac{h^2}{2} M  \frac{e^{\omega (k+1)h}-1}{\omega} \sup_{0 \leq \sigma \leq h} \| f'(t_n+\sigma) \right\|_Y,$$
for $k=0,...,n.$ Thanks to the discrete Gronwall Lemma, we get the thesis with $C:=\frac{1}{2} M \frac{e^{\omega T}-1}{\omega}.$\end{proof}
\noindent 
For the ExpEuler method we have the following convergence result

\begin{theorem}
Assume \ref{AA1}-\ref{AA2} and $f(t):=F(t,u(t)), t \in [0,T]$ is such that $f' \in L^{\infty}((0,T),Y).$ Then the error bound 
$$
|| u(t_n)-u_n ||_X \leq \tilde C \max_{ 0 \leq t \leq T} \| f'(t) \|_Y  \cdot h,
$$
holds for $n=1,\ldots,N.$ The constant $ \tilde C$ depends on $T$ but it is independent of $h$ and $n.$
\end{theorem}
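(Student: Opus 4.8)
The strategy is the standard Lady Windermere's fan argument for one-step methods, but carried out in the sun-star setting with the weak$^\star$ integral. First I would set up the recursion for the global error $e_n := u(t_n) - u_n$. Subtracting the ExpEuler update $u_{n+1} = \mathcal{T}_0(h)u_n + h\varphi_1(h\mathcal{A}_0)\mathcal{F}(t_n,u_n)$ from the exact identity $u(t_{n+1}) = \mathcal{T}_0(h)u(t_n) + h\varphi_1(h\mathcal{A}_0)\mathcal{F}(t_n,u(t_n)) + \sigma_{n+1}$ derived just above the statement, I get
\begin{equation*}
e_{n+1} = \mathcal{T}_0(h)e_n + h\varphi_1(h\mathcal{A}_0)\bigl(\mathcal{F}(t_n,u(t_n)) - \mathcal{F}(t_n,u_n)\bigr) + \sigma_{n+1}.
\end{equation*}
Unrolling this recursion from $e_0 = 0$ gives $e_n = \sum_{k=0}^{n-1}\mathcal{T}_0(kh)\bigl[h\varphi_1(h\mathcal{A}_0)(\mathcal{F}(t_{n-1-k},u(t_{n-1-k})) - \mathcal{F}(t_{n-1-k},u_{n-1-k})) + \sigma_{n-k}\bigr]$.

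Next I would split the sum into the defect part and the propagated-error part. For the defect part, the key point is that the telescoped sum $\sum_{k}\mathcal{T}_0(kh)\sigma_{n-k}$ is \emph{exactly} the object bounded in Lemma \ref{lbounded}, so it contributes at most $Ch^2\sup_{[0,T]}\|f'\|_Y \leq Ch^2\max_{[0,T]}\|f'\|_Y$; note the one-order gain from $\mathcal{O}(h^2)$ local defects summing to $\mathcal{O}(h)$ global error is here absorbed into the cancellation exploited in that lemma rather than a naive $n\cdot h^2$ count, which is precisely why the lemma is stated in telescoped form. For the propagated-error part, I would use that $\|\mathcal{T}_0(kh)\|_{Y\leftarrow Y} \leq Me^{\omega kh} \leq Me^{\omega T}$ uniformly for $kh \leq T$, that $\varphi_1(h\mathcal{A}_0)$ is norm-bounded on $Y$ by Lemma \ref{phibounded} (say by a constant $c_\varphi$), and Assumption \ref{AA2} to bound $\|\mathcal{F}(t_j,u(t_j)) - \mathcal{F}(t_j,u_j)\|_Y \leq L\|e_j\|_Y$, valid as long as $u_j$ stays in the strip of radius $\overline R$ around $u(t_j)$. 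Collecting constants, this yields $\|e_n\|_Y \leq hMe^{\omega T}c_\varphi L\sum_{j=0}^{n-1}\|e_j\|_Y + Ch^2\max_{[0,T]}\|f'\|_Y$.

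The final step is the discrete Gronwall lemma applied to $\|e_n\|_Y$, which converts this into $\|e_n\|_Y \leq \tilde C h\max_{[0,T]}\|f'\|_Y$ with $\tilde C$ depending only on $T$, $M$, $\omega$, $c_\varphi$, $L$ (hence on $T$ but not on $h$ or $n$); finally I would upgrade the $Y$-norm bound to the $X$-norm bound asserted in the theorem by invoking the regularizing property from Lemma \ref{phibounded}'s proof (the cited \cite[Lemma 2.3]{diekmann95}), namely that weak$^\star$ integrals $\int_0^t\mathcal{T}_0(t-s)f(s)\diff s$ of norm-continuous integrands actually take values in $X$ with a controlled $X$-norm --- here $e_n$ is, up to the $\mathcal{T}_0(h)e_n$ term which is handled by strong continuity on $X$ and induction, exactly such an integral. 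The main obstacle is the bootstrapping issue: the Lipschitz estimate in Assumption \ref{AA2} only holds inside the strip $\|u_j - u(t_j)\|_Y \leq \overline R$, so one must argue, via a standard induction on $n$, that for $h$ small enough the computed $u_j$ never leave this strip before the Gronwall bound is available to guarantee it --- the usual circular-looking but routine argument where the error bound one is proving is itself what keeps the iterates admissible. The secondary subtlety worth stating carefully is the passage from the $Y$-norm to the $X$-norm, since the semigroup is not strongly continuous on $Y$ and one cannot simply estimate $\|\mathcal{T}_0(h)e_n\|_X$ by $\|e_n\|_X$ unless one tracks that $e_n \in X$ throughout, which it does since $u(t_n), u_n \in X$ by construction.
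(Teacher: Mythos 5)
Your proposal follows essentially the same route as the paper's proof: the same error recursion for $E_n=u(t_n)-u_n$, unrolled into the propagated Lipschitz terms plus the telescoped defect sum handled by Lemma \ref{lbounded}, and closed with the discrete Gronwall lemma. The only differences are cosmetic: the paper bounds $\mathcal{T}_0(kh)\,h\varphi_1(h\mathcal{A}_0)\Delta_{n+1-k}$ by rewriting it as the single weak$^\star$ integral $\int_0^h\mathcal{T}_0((k+1)h-s)\diff s\;\Delta_{n+1-k}$ via \cite[Lemma 3.14, Lemma 2.3]{diekmann95} rather than multiplying operator norms as you do, and it leaves implicit the strip/bootstrap argument and the $X$- versus $Y$-norm bookkeeping that you spell out.
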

\begin{proof}

The error $E_{n}:=u(t_n)-u_n$ of the ExpEuler method satisfies the recursion
\begin{equation}\label{errEul}
\begin{array}{ll}
E_{n+1}&= \mathcal{T}_0(h)E_{n}+h\varphi_1(h\mathcal{A}_0)\Delta_{n+1}+\sigma_{n+1}, \ n=0,\ldots,N-1,
\end {array}
\end{equation}
with $\Delta_{n+1}:=\mathcal{F}(t_n,u(t_n))-\mathcal{F}(t_{n},u_{n}), \ n=0,\dots,N-1.$ Solving it yields
$$
E_{n+1}=\sum\limits_{k=0}^n \mathcal{T}_0(kh)\varphi_1(h\mathcal{A}_0)\Delta_{n+1-k}+ \sum\limits_{k=0}^n \mathcal{T}_0(kh)\sigma_{n+1-k}.
$$
Now $\Delta_{0}=0$ and, from \cite[Lemma 3.14,Lemma 2.3]{diekmann95} and the local Lipschitz-continuity of  $\mathcal{F}$, we get
$$
\mathcal{T}_0(kh)h \varphi_1(h\mathcal{A}_0)\Delta_{n+1-k}=\int\limits_0^{h} \mathcal{T}_0((k+1)h-s) ds \ \Delta_{n+1-k}, \ k=1,\ldots,n,
$$
and 
$$
||\mathcal{T}_0(kh)h \varphi_1(h\mathcal{A}_0)\Delta_{n+1-k}||_Y \leq h C ||E_{k}||_Y, \ k=1,\ldots,n.
$$
for a constant $C$.

From Lemma \ref{lbounded} and using the bounds given above we have for $n=0,\ldots,N-1$ and 

$$
\begin{array}{ll}
||E_{n+1}||_X&\leq h C \sum\limits_{k=1}^n ||E_{k}||_Y+ h C\sup_{ 0 \leq t  \leq T} \| f'(t) \|_Y \\
\end{array}
$$
and from the discrete Gronwall Lemma the thesis follows.
\end{proof}

In this abstract setting, under the assumptions \ref{AA1}-\ref{AA2}-\ref{AA3}, the analysis of the convergence of a scheme that fulfills the order condition up to order $p$ can be carried out as in \cite{HO10,LO14} and a theorem analogous to Theorem \ref{th_convergence} holds.
\section{ExpRK-methods for delay equations}\label{S_expDDE}

We are interested in applying explicit ExpRK-methods to integrate delay equations.\\

Let us start from DDEs, dealt with in section \ref{SS_aDDE}. The goal is to integrate \eqref{IVPADE} in time for $u$ defined in \eqref{u_DDE}, $\mathcal{A}_0$ as in \eqref{A_DDE} and $\mathcal{F}$ as in \eqref{F_DDE}. From \eqref{phik}, \eqref{T0sunstar} and 
$$
\int_0^h\mathcal{T}_0(h-s)(f(s);0)\diff s =\left(\int_0^hf(s)\diff s;\int_0^{\max\{0,h+\cdot\}}f(s)\diff s\right)
$$
we get
\begin{equation}\label{phik_DDE}
h^k\varphi_k(h\mathcal{A}_0)=\int_0^h\mathcal{T}_0(h-s)\left(\frac{s^{k-1}}{(k-1)!};0\right)\diff s=\left(\frac{h^k}{k!};\frac{\max\{0,h+\cdot\}}{k!}\right),\quad k\geq 1
\end{equation}
Since the coefficients $a_{ij}(h\mathcal{A}_0)$ and $b_i(h\mathcal{A}_0)$ are linear combinations of $\varphi_k(h\mathcal{A}_0)$, \eqref{phik_DDE} allows us to write \eqref{expmethod} explicitly for a given method. For instance, the ExpEuler method \eqref{eulerButcher} is given for $u_n=(y_n;\eta_n)$ by
\begin{equation*}
\left\{\setlength\arraycolsep{0.1em}\begin{array}{rll}
y_{n+1}&=&y_n+hF(t_n,\eta_n)\\[1mm]
\eta_{n+1}(\theta)&=&\left\{\setlength\arraycolsep{0.1em}\begin{array}{rll}
&\eta_n(h+\theta),&\quad \theta\in[-\tau,-h), \\[1mm]
&y_n+(h+\theta)F(t_n,\eta_n),&\quad \theta\in[-h,0].
\end{array}
\right.
\end{array}
\right.
\end{equation*}
Similarly, the exponential Heun method, having Butcher tableau \eqref{heunButcher} and stiff order two, is given by
\begin{equation*}
\left\{\setlength\arraycolsep{0.1em}\begin{array}{rll}
y_{n+1}&=&y_n+\displaystyle\frac{h}{2}F(t_n,\eta_n)+\frac{h}{2}F(t_{n+1},\eta_{2,n})\\[1mm]
\eta_{n+1}(\theta)&=&\left\{\setlength\arraycolsep{0.1em}\begin{array}{rll}
&\eta_n(h+\theta),&\quad \theta\in[-\tau,-h), \\[1mm]
&y_n+\displaystyle\left(h+\theta-\frac{(h+\theta)^2}{2h}\right)F(t_n,\eta_n)+\frac{(h+\theta)^2}{2h} F(t_{n+1},\eta_{2,n}),&\quad \theta\in[-h,0],
\end{array}
\right.\\[1mm]
\eta_{2,n}(\theta)&=&\left\{\setlength\arraycolsep{0.1em}\begin{array}{rll}
&\eta_n(h+\theta),&\quad \theta\in[-\tau,-h), \\[1mm]
&y_n+\displaystyle(h+\theta)F(t_n,\eta_n),&\quad \theta\in[-h,0].
\end{array}
\right.
\end{array}
\right.
\end{equation*}
As a third example, we consider the method of stiff order three \cite[equation (5.8)]{HO05a} after setting $c_2=\frac{1}{2}$, having Butcher tableau \eqref{order3Butcher}. For DDEs, it reads
\begin{equation*}
\left\{\setlength\arraycolsep{0.1em}\begin{array}{rll}
y_{n+1}&=&y_n+\displaystyle\frac{h}{4}F(t_n,\eta_n)+\frac{3}{4}hF\left(t_n+\frac{2}{3}h,\eta_{3,n}\right)\\[1mm]
\eta_{n+1}(\theta)&=&\left\{\setlength\arraycolsep{0.1em}\begin{array}{rll}
&\eta_n(h+\theta),&\quad \theta\in[-\tau,-h), \\[1mm]
&y_n+\displaystyle\left(h+\theta-\frac{3(h+\theta)^2}{2h}\right)F(t_n,\eta_n)+\frac{3(h+\theta)^2}{4h} F\left(t_n+\frac{2}{3}h,\eta_{3,n}\right),&\quad \theta\in[-h,0],
\end{array}
\right.\\[1mm]
\eta_{3,n}(\theta)&=&\left\{\setlength\arraycolsep{0.1em}\begin{array}{rll}
&\eta_n(\frac{2}{3}h+\theta),&\quad \theta\in[-\tau,-\frac{2}{3}h), \\[1mm]
&y_n+\displaystyle\left(\frac{2}{3}h+\theta-\frac{(\frac{2}{3}h+\theta)^2}{h}\right)F(t_n,\eta_n)+\frac{(\frac{2}{3}h+\theta)^2}{h}F\left(t_n+\frac{1}{2}h,\eta_{2,n}\right),&\quad \theta\in[-\frac{2}{3}h,0].
\end{array}
\right.\\[1mm]
\eta_{2,n}(\theta)&=&\left\{\setlength\arraycolsep{0.1em}\begin{array}{rll}
&\eta_n(\frac{1}{2}h+\theta),&\quad \theta\in[-\tau,-\frac{1}{2}h), \\[1mm]
&y_n+\displaystyle\left(\frac{1}{2}h+\theta\right)F(t_n,\eta_n),&\quad \theta\in[-\frac{1}{2}h,0].
\end{array}
\right.
\end{array}
\right.
\end{equation*}
As observed in section \ref{S_exp}, the methods \eqref{eulerButcher} and \eqref{heunButcher} satisfy the order conditions in Table \ref{ordercond} up to order 1 and 2, respectively. On the other hand, \eqref{order3Butcher} only satisfies those of order 3 in the weak sense, defined as in Theorem \ref{th_convergence}. By the same theorem, this is sufficient to obtain the convergence of order 3 when using a constant stepsize under assumption \ref{AA3}.

\begin{remark}\label{distrdel}In the presence of breaking points (see Remark \ref{breakingpoints}), these can be included into the mesh by using variable stepsizes $h_n=t_{n+1}-t_n$ so that $x(t)$, $t \in [t_n,t_{n+1}]$ is sufficiently smooth. Recall that $x_t$ attains more regularity as $t$ increases, thanks to the smoothing effect (Remark \ref{breakingpoints}). In this case, the strong order conditions of order $p$ ensure $p$-order of convergence.\\
In case of DDEs with distributed delays, the evaluation of $F$ involves an integral, so, in general, one constructs an approximation $\tilde F$ by using a suitable composite quadrature rule (see, e.g., \cite{brunner1986numerical,Vermiglio1988,Vermiglio1992}) or available codes for numerical integration. In this respect we can consider a formula that guarantees at least the same order of the \emph{main} error, that is, the one that we would obtain if $F$ could be computed exactly. Alternatively, given a tolerance TOL which bounds the quadrature error from above, we can accept the fact that the final error decays down to TOL and not to 0.
\end{remark}

\begin{remark}
The ExpRK methods in this abstract setting can be easily extended to semilinear DDEs of this form 
\begin{equation}\label{SLDDE}
\left\{
\begin{aligned}
&{x}'(t)=L x(t)+G(t,x_t), t \in [0,T], \\
&x_0=\phi,
\end{aligned}
\right.
\end{equation}
where $L$ is a stiff matrix, i.e., possessing eigenvalues with large negative real parts. Indeed by defining 
\begin{equation}\label{A_SLDDE}
\mathcal{A}_0(\alpha;\phi)=(L\alpha;\phi'),\quad D(\mathcal{A}_0)=\{(\alpha;\phi)\in X\lvert \ \phi\text{ is Lipschitz-continuous },\phi(0)=\alpha\},
\end{equation}
\begin{equation}\label{G_SLDDE}
\mathcal{F}(t,(\phi(0);\phi))=(G(t,\phi);0),
\end{equation}
and $u_0=(\phi(0);\phi),$ \eqref{SLDDE} can be formally written as \eqref{IVPADE}. We have that $\mathcal{A}_0$ is the infinitesimal generator of the semigroup $\{\mathcal{T}_0(t)\}_{t \geq 0}$ with
\begin{equation}\label{T0SLDDE}\mathcal{T}_0(t)(\alpha;\phi)=
\left(e^{Lt}\alpha;\theta\mapsto\left\{\setlength\arraycolsep{0.1em}\begin{array}{rcl}
&e^{L(t+\theta)}\alpha,&\quad \theta\in[-t,0], \\[1mm]
&\phi(t+\theta),&\quad \theta\in[-\tau,-t).
\end{array}
\right.\right).
\end{equation}
For instance, the ExpEuler method \eqref{eulerButcher} reads as
\begin{equation*}
\left\{\setlength\arraycolsep{0.1em}\begin{array}{lll}
y_{n+1}&=&e^{Lh}y_n+h\varphi_1(hL)G(t_n,\eta_n),\\[1mm]
\eta_{n+1}(\theta)&=&\left\{\setlength\arraycolsep{0.1em}\begin{array}{rll}
&\eta_n(h+\theta),&\quad \theta\in[-\tau,-h), \\[1mm]
&e^{L(h+\theta)}y_n+(h+\theta)\varphi_1((h+\theta)L)G(t_n,\eta_n),&\quad \theta\in[-h,0].
\end{array}
\right.
\end{array}
\right.
\end{equation*}
and its implementation requires an efficient algorithm to compute the product of a exponential matrix and a vector (see for instance \cite{AH11}).
Thus the application of ExpRK methods in the sun-star abstract setting leads to the class of ExpRK for \eqref{SLDDE}, showing the flexibility of the approach. 

Explicit ExpRK schemes for semilinear DDEs with a single discrete delay have been considered in \cite{FZ21,ZZO16}. The former focuses on order conditions and convergence analysis in the autonomous case, while the latter explores the stability properties of explicit ExpRK schemes as well as Magnus integrators equipped with Lagrangian interpolation.
\end{remark}

Finally, we underline that ExpRK methods for DDEs reduce to the class of functional continuous RK schemes in \cite{BGMZ09},  where one can find the convergence analysis and explicit schemes up to order four, based on the results in \cite{MTV05}. The definitions of uniform order, discrete order of continuous RK correspond to the definitions of stiff order and weak order conditions of  ExpRK methods for DDEs. Moreover, methods of order 5 and 6 are available from \cite{LA24,LO14}. \\

In order to define ExpRK-methods for REs explicitly, we consider \eqref{IVPADE} for $u$ defined as $u(t):=jx_t$ with $j$ in \eqref{j_RE}, $\mathcal{A}_0$ in \eqref{A_RE} and $\mathcal{F}$ in \eqref{F_RE}. By \eqref{phik}, \eqref{T0REsunstar} and
$$
\int_0^h\mathcal{T}_0(h-s)f(s)H\diff s=\int_{\max\{0,h+\cdot\}}^tf(s)\diff s
$$
 we have
\begin{equation}\label{phik_RE}
h^k\varphi_k(h\mathcal{A}_0)=\int_0^h\mathcal{T}_0(h-s)\frac{s^{k-1}}{(k-1)!}\diff s=\frac{h^k-\max\{0,h+\cdot\}^k}{k!},\quad k\geq 1.
\end{equation}
The exponential Euler method \eqref{eulerButcher} is given for $u_n=j\eta_n$ by
\begin{equation*}
\left\{\setlength\arraycolsep{0.1em}\begin{array}{rll}
u_{n+1}(\theta)&=&\left\{\setlength\arraycolsep{0.1em}\begin{array}{rll}
&u_n(h+\theta)+hF(t_n,\eta_n),&\quad \theta\in[-\tau,-h), \\[1mm]
&-\theta F(t_n,\eta_n),&\quad \theta\in[-h,0].
\end{array}
\right.\\[1mm]
\eta_{n+1}(\theta)&=&\left\{\setlength\arraycolsep{0.1em}\begin{array}{rll}
&\eta_n(h+\theta),&\quad \theta\in[-\tau,-h), \\[1mm]
&F(t_n,\eta_n),&\quad \theta\in[-h,0].
\end{array}
\right.
\end{array}
\right.
\end{equation*}
Similarly, the exponential Heun method \eqref{heunButcher}
is given by
\begin{equation*}
\left\{\setlength\arraycolsep{0.1em}\begin{array}{rll}
u_{n+1}(\theta)&=&\left\{\setlength\arraycolsep{0.1em}\begin{array}{rll}
&u_n(h+\theta)+\displaystyle\frac{h}{2}(F(t_n,\eta_n)+F(t_{n+1},\eta_{2,n})),&\quad \theta\in[-\tau,-h), \\[1mm]
&\displaystyle \frac{\theta^2}{2h} F(t_n,\eta_n)-\left(\theta+\frac{\theta^2}{2h}\right)F(t_{n+1},\eta_{2,n}),&\quad \theta\in[-h,0],
\end{array}
\right.\\[1mm]
\eta_{n+1}(\theta)&=&\left\{\setlength\arraycolsep{0.1em}\begin{array}{rll}
&\eta_n(h+\theta),&\quad \theta\in[-\tau,-h), \\[1mm]
&\displaystyle -\frac{\theta}{h} F(t_n,\eta_n)+\left(1+\frac{\theta}{h}\right)F(t_{n+1},\eta_{2,n}),&\quad \theta\in[-h,0],
\end{array}
\right.\\[1mm]
\eta_{2,n}(\theta)&=&\left\{\setlength\arraycolsep{0.1em}\begin{array}{rll}
&\eta_n(h+\theta),&\quad \theta\in[-\tau,-h), \\[1mm]
&F(t_n,\eta_n),&\quad \theta\in[-h,0],
\end{array}
\right.
\end{array}
\right.
\end{equation*}
while the method \eqref{order3Butcher} by
\begin{equation*}
\left\{\setlength\arraycolsep{0.1em}\begin{array}{rll}
u_{n+1}(\theta)&=&\left\{\setlength\arraycolsep{0.1em}\begin{array}{rll}
&u_n(h+\theta)+\displaystyle\frac{h}{4}F(t_n,\eta_n)+\frac{3}{4}hF\left(t_n+\frac{2}{3}h,\eta_{3,n}\right),&\quad \theta\in[-\tau,-h), \\[1mm]
&\displaystyle \frac{1}{2}\left(\theta+3\frac{\theta^2}{2h}\right) F(t_n,\eta_n)-\frac{3}{2}\left(\theta+\frac{\theta^2}{2h}\right)F\left(t_n+\frac{2}{3}h,\eta_{3,n}\right),&\quad \theta\in[-h,0],
\end{array}
\right.\\[1mm]
\eta_{n+1}(\theta)&=&\left\{\setlength\arraycolsep{0.1em}\begin{array}{rll}
&\eta_n(h+\theta),&\quad \theta\in[-\tau,-h), \\[1mm]
&-\displaystyle\frac{1}{2}\left(1+3\frac{\theta}{h}\right)F(t_n,\eta_n)+\frac{3}{2}\left(1+\frac{\theta}{h}\right) F\left(t_n+\frac{2}{3}h,\eta_{3,n}\right),&\quad \theta\in[-h,0],
\end{array}
\right.\\[1mm]
\eta_{3,n}(\theta)&=&\left\{\setlength\arraycolsep{0.1em}\begin{array}{rll}
&\eta_n(\frac{2}{3}h+\theta),&\quad \theta\in[-\tau,-\frac{2}{3}h), \\[1mm]
&\displaystyle\left(1+\frac{2\theta}{h}\right)F(t_n,\eta_n)-2\left(1+\frac{\theta}{h}\right))F\left(t_n+\frac{1}{2}h,\eta_{2,n}\right),&\quad \theta\in[-\frac{2}{3}h,0].
\end{array}
\right.\\[1mm]
\eta_{2,n}(\theta)&=&\left\{\setlength\arraycolsep{0.1em}\begin{array}{rll}
&\eta_n(\frac{1}{2}h+\theta),&\quad \theta\in[-\tau,-\frac{1}{2}h), \\[1mm]
&F(t_n,\eta_n),&\quad \theta\in[-\frac{1}{2}h,0].
\end{array}
\right.
\end{array}
\right.
\end{equation*}
Theorem \ref{th_convergence} gives us the order of convergence of the three methods, which also applies to RE. However, one could be interested in the error on $x_t=j^{-1}u$. As for Euler's method \eqref{eulerButcher} by applying $j^{-1}$ to both sides of \eqref{errEul}, we get
\begin{equation*}
\setlength\arraycolsep{0.1em}\begin{array}{rcll}
x_{t_{n+1}}-\eta_{n+1}&=&\tilde{\mathcal{T}_0}(h)(x_{t_n}-\eta_n)+hj^{-1}\varphi_1(h\mathcal{A}_0)(F(t_n,x_{t_n})-F(t_n,\eta_n))H+j^{-1}\delta_n\\[1mm]
&=&\tilde{\mathcal{T}_0}(h)(x_{t_n}-\eta_n)+\left\{\setlength\arraycolsep{0.1em}\begin{array}{rll}&(F(t_n,x_{t_n})-F(t_n,\eta_n))H+(h+\theta)f'(\xi),&\quad \theta\in[-h,0], \\[1mm]
&0,&\quad \theta\in[-\tau,-h].
\end{array}
\right.
\end{array}
\end{equation*}
where $\tilde{\mathcal{T}_0}=j^{-1}\mathcal{T}_0j$. Thus, if we evaluate the $X$-norm of the left-hand side, we get
$$
\|E_{n+1}\|_X\leq\|E_n\|_X+hL\|e_n\|_X+Ch^2
$$
for $E_n:=x_{t_n}-\eta_n$ and some constant $C_1$. Since we know that $\|e_n\|\leq C_2h$ for some constant $C_2$, we get $\|E_{n+1}\|_X=\|E_n\|_X+\mathcal{O}(h^2)$, which gives $\|E_n\|_X=Ch$ for some constant $C$, meaning that the error on $x_t$ decays at the same rate as that on $jx_t$. The argument can be extended to the Heun method \eqref{heunButcher}, as well as other methods satisfying the required order conditions in the strong form.\\

We conclude by remarking that for REs the computation of $F$ involves an integral and so, in general, one has to provide an approximation $\tilde F$. In particular, it is necessary to resort to a quadrature rule, just like with DDEs with distributed delays (see Remark \ref{distrdel}). The resulting ExpRK method are connected to a class of RK method for Volterra equation with delay with continuous extensions \cite{BD93,BJVZ89,B17}.  Even so, ExpRK methods for REs provide a novel, more general approach to derive high-order methods and for convergence analysis.

\section{Numerical results}\label{S_nDDE}
In this section we show the results of numerical simulations using ExpRK-methods on a prototype models defined by DDEs and REs, using constant $h$ such that $\frac{\tau}{h}$ is an integer.  All the relevant MATLAB codes will be freely available at \url{http://cdlab.uniud.it/software} upon publication. First, we consider the IVP \cite[(1.2.3)]{belzen03}
\begin{equation}\label{belzen}
\left\{
\begin{aligned}
&x'(t)=\lambda x(t)-\frac{\pi}{2}\text{e}^{\lambda}x(t-1), \\
&x(t)=\text{e}^{\lambda t}\text{sin}\left(\frac{\pi t}{2}\right),\quad t\in[-1,0].
\end{aligned}
\right.
\end{equation}
Its exact solution is $x(t)=\text{e}^{\lambda t}\text{sin}\left(\frac{\pi t}{2}\right), t\geq -1$. Our goal is to provide numerical evidence of the convergence rate \eqref{errbound} for the methods expEuler \eqref{eulerButcher}, Heun  \eqref{heunButcher} and the method of order 3 \eqref{order3Butcher}.

The solution of \eqref{belzen} is integrated in time from $t=0$ up to $T=2$ using timesteps $h= 10^{-1},10^{-2},\ldots,10^{-6}$. Figure \ref{fig:BZ123_errs} confirms the $O(h^p)$ behavior (being $\mathcal{F}$, as well as the exact solution, infinitely differentiable).

\begin{figure}
\centering
\includegraphics[scale=0.75]{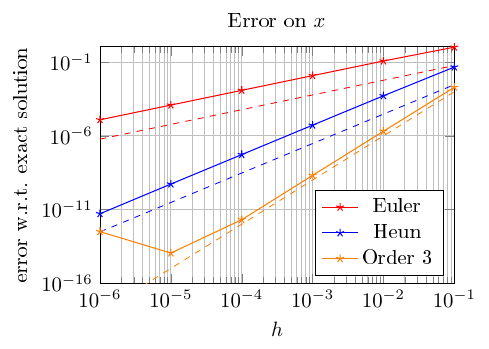}
\caption{Error on the value at $T=2$ of the solution of \eqref{belzen} for $\lambda=1$. Red: Euler method, compared with (dashed) straight line having slope 1. Blue: Heun method, compared with (dashed) straight line having slope 2. Orange: method \eqref{order3Butcher}, compared with (dashed) straight line having slope 3.}
\label{fig:BZ123_errs}
\end{figure}


Next, we show the results of numerical simulations using ExpRK-methods on a prototype RE model. Consider the IVP \cite[(A.1)]{bdls16}
\begin{equation}\label{quadraticRE}
\left\{
\begin{aligned}
&x(t)=\displaystyle\frac{\gamma}{2}\int_{t-3}^{t-1}x(s)(1-x(s))\diff s, \\
&x(t)=c+A\text{sin}\left(\frac{\pi t}{2}\right),\quad t\in[-3,0],
\end{aligned}
\right.
\end{equation}
where
\begin{equation*}
\left\{
\begin{aligned}
&c=\displaystyle\frac{1}{2}+\frac{\pi}{4\gamma}, \\
&A^2=\displaystyle\sqrt{2c\left(1-\frac{1}{\gamma}-c\right)}.
\end{aligned}
\right.
\end{equation*}
Its exact solution is $x(t)=c+A\text{sin}\left(\frac{\pi t}{2}\right), t\geq -3$ for certain values of $\gamma$. In the following, we analyze the convergence of our explicit ExpRK-methods \eqref{eulerButcher}, \eqref{heunButcher} and \eqref{order3Butcher} numerically, for \eqref{quadraticRE}.

The solution of \eqref{quadraticRE} is integrated in time from $t=0$ up to $T=4$ using timesteps $h= 10^{-1},10^{-2},\ldots,10^{-5}$. Figure \ref{fig:quadraticRE_errs} confirms the expected $O(h^p)$ behavior for the integrated state $u$, which is maintained on $x$ for the methods satisfying the order conditions in Table \ref{ordercond}. For the method \eqref{order3Butcher}, satisfying the conditions of order $p=3$ only in the weak sense, the order of convergence obtained for $x$ is 2.

\begin{figure}
\centering
\includegraphics[scale=0.75]{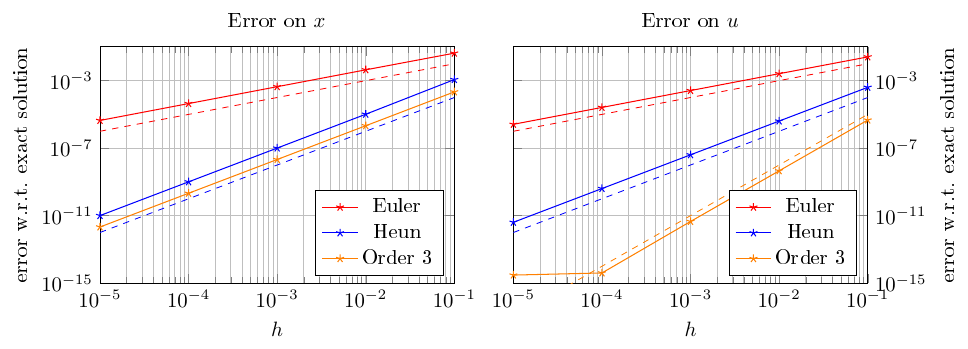}
\caption{Error on the $L^1$-norm at $T=4$ of the solution of \eqref{quadraticRE} for $\gamma=4$ (left) and of its integrated state, defined by \eqref{j_RE} (right). Red: Euler method, compared with (dashed) straight line having slope 1. Blue: Heun method, compared with (dashed) straight line having slope 2. Orange: method \eqref{order3Butcher}, compared in the right plot with (dashed) straight line having slope 3.}
\label{fig:quadraticRE_errs}
\end{figure}
We conclude this section with the results of a numerical simulation showing that the method can also be extended to coupled RE/DDE systems. Namely, we consider the simplified logistic Daphnia model \cite{bdmv13}
\begin{equation}\label{simpledaphnia}
\left\{\setlength\arraycolsep{0.1em}\begin{array}{rcll}
b(t) &=& \beta S(t)\displaystyle\int_{\overline{a}}^{a_{\text{max}}}b(t-a)\diff a,& \\[3mm]
S'(t) &=& r \displaystyle S(t)\left(1-\frac{S(t)}{K}\right)-\gamma S(t)\int_{\overline{a}}^{a_{\text{max}}}b(t-a)\diff a.& 
\end{array}
\right.
\end{equation}
As shown in \cite{bdmv13}, for $r=K=\gamma=1$, $\overline{a}=3$ and $a_{\text{max}}=4$, a Hopf bifurcation occurs when $\beta \approx 3.0162$. The solution of \eqref{simpledaphnia} for $\beta = 3.02$ is integrated in time from $t=0$ up to $T=60$ using timestep $h=10^{-2}$ and the method \eqref{order3Butcher}. The chosen starting initial condition is the constant (0.7, 0.35), which intersect the stable periodic solution. Figure \ref{fig:daphnia} shows both the convergence to the periodic solution and the smoothing effect of the delay, particularly visible on the $b$ component at $t=4$, where the solution has a discontinuous derivative.\\
\begin{figure}
\centering
\includegraphics[scale=1.25]{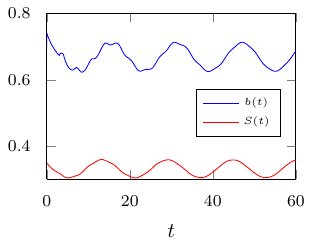}
\caption{Solution of \eqref{simpledaphnia} for $r=K=\gamma=1$, $\overline{a}=3$ and $a_{\text{max}}=4$ from $t=0$ to $T=60$, computed with \eqref{order3Butcher} and $h=10^{-2}$. Red: DDE component. Blue: RE component.}
\label{fig:daphnia}
\end{figure}

\section{Conclusions}\label{S_end}

In this paper, we have seen how ExpRK-methods can be extended from semilinear ODEs to ADEs, whose state space is a space of functions.
The interest towards this class of explicit methods stems from the possibility, given by the sun-star theory, to reformulate delay equations as abstract integral equations. For DDEs, explicit ExpRK-methods reduce to explicit continuous RK methods \cite{BGMZ09,M03}, while for REs they are associated to the class of RK methods for Volterra equations with delay with continuous extensions \cite{BD93,BJVZ89,B17}. However, here we derive them and prove convergence in a unifying and original way. Indeed, the abstract framework includes the coupled RE/DDE systems as well, and the relevant computational results shown in Section \ref{S_nDDE} suggest that we can also extend the convergence theorem accordingly, exploiting the theoretical results available for DDEs and REs separately. The details of the proof will be subject of future work.\\
While we have mostly focused on using constant timesteps, the analysis can be extended to variable timesteps, although this might lead to the loss of one order of convergence for the methods satisfying the conditions of order $p$ only in the weak sense.\\
In the near future, we plan to apply expRK-methods to different classes of equations, such as equations with infinite delay or structured population models, such as \cite{gmc74}.\\
Another interesting extension of the present work would be to explore different exponential methods other than expRK, such as exponential Rosenbrock methods, Magnus methods, linear multistep methods or general linear methods.\\
Finally, we observe that, since ExpRK-methods were originally formulated for ODEs, these methods can alternatively be applied to delay equations after having priorly discretized the latter to finite-dimensional systems of ODEs, e.g., via pseudospectral discretization \cite{bdgsv16}. \\
\section*{Acknowledgments}
The authors are members of INdAM Research group GNCS, as well as of UMI Research group “Modellistica socio-epidemiologica”. This work was supported by the Italian Ministry of University and Research (MUR) through the PRIN 2020 project (No. 2020JLWP23) “Integrated Mathematical Approaches to Socio–Epidemiological Dynamics”, Unit of Udine (CUP G25F22000430006).

\end{document}
\begin{equation}\label{}
A(\tau)= \begin{cases}
A(4-\tau)\geq0, & 0\leq \tau \leq 4, \\
0, & \tau>4,
\end{cases}
\end{equation}

\begin{equation*}
\left\{\setlength\arraycolsep{0.1em}\begin{array}{rcl}
A&B&C\\[2mm]
D&E&F
\end{array}\right.
\end{equation*}

\begin{equation}\label{}
\left\{
\begin{aligned}
A &= (-1)^k \frac{4}{k\pi} \sin \left(\frac{k \pi}{2}\right) \frac{\gamma}{2} (1-2\theta) A, \\
\theta &= \gamma \theta (1-\theta) -\frac{\gamma}{2}A^{2}.
\end{aligned}
\right.
\end{equation}

\begin{align}
\mathcal{K}(\alpha_{1},\alpha_{})&\coloneqq e^{\int_{\alpha_{}}^{\alpha_{1}}g_{1}(\theta)d\theta}g_{}(\alpha_{}),\label{K}\\
\mathcal{Klambda}(\alpha_{1},\alpha_{})&\coloneqq -\mathcal{F}(\alpha_{1},\bar{TB})\left(\int_{\alpha_{}}^{\alpha_{1}}\mu_{1}(\theta)\mathcal{K}(\theta,\alpha_{})d\theta+\mu_{}(\alpha_{})\right).\label{Klambda}
\end{align}

\begin{equation*}
\begin{split}
z(t+4)={}& \int_{0}^4 A(\tau) h(z(t+4-\tau)) {\rm d} \tau \\
={}& \int_{0}^4 A(4-\sigma)h(z(t+\nu)){\rm d} \sigma \\
={}& \int_{0}^4 A(\tau) h(z(t+\tau)) {\rm d} \tau \\
={}& \int_{0}^4 A(\tau) h(z(-t-\tau)) {\rm d} \tau \\
={}& z(-t),
\end{split}
\end{equation*}

\begin{equation}\label{equation1}
\begin{split}
x'&=x(\alpha-\beta y),\\
y'&=-y(\gamma-\delta x).
\end{split}
\end{equation}

\begin{equation}\label{equation2}
\begin{split}
\dot{y}(t) ={}& a y(t)+b f(y(t-\gamma_{0}\tau)))\\
>{}& a y(t)-b f(y(t-\gamma_{0}\tau))\\
& +b f(y(t-\gamma_{0}\tau))\\
={}& a y(t).
\end{split}
\end{equation}

\begin{align}
\dot{y}(t) &= a y(t)+b f(y(t)),\label{equation3a}\\
\ddot{y}(t) &= a y(t) - b f(y(t)),\label{equation3b}\\
y(0) &= y_{0}.\label{equation3c}
\end{align}

\begin{equation}\label{equation4}
|x|=\begin{cases}
x,\quad &x \geq 0,\\
-x, \quad &x<0.
\end{cases}
\end{equation}

\begin{equation*}
\begin{split}
\alpha x(t)+\beta y(t)= {} &\int_{t_{0}}^{t}f(x(s),y(s-\tau))\,ds-\int_{t_{0}}^{t}g(x(s-\tau),y(s))\,ds\\
&+\int_{t_{0}}^{t}h(x(s-\tau),y(s-\tau))\,ds.
\end{split}
\end{equation*}

\begin{figure}[!h]
\centering
\includegraphics[width=\textwidth]{}
\caption{caption. See text for more details.}\label{f_E}
\end{figure}

\section{}
\label{s_}
\subsection{}
\label{s_}
\appendix
\section{}
\label{}
{\color{red} ()}